\theoremstyle{plain}
\newtheorem{theorem}{Theorem}
\newtheorem{lemma}[theorem]{Lemma}
\newtheorem{corollary}[theorem]{Corollary}
\newtheorem{proposition}[theorem]{Proposition}
\theoremstyle{definition}
\newtheorem{definition}[theorem]{Definition}
\numberwithin{equation}{section}
\newcommand{\N}{\mathbb N}
\newcommand{\A}{\mathcal A}
\newcommand{\B}{\mathcal B}
\def\uu{\mathbf{u}}
\def\xx{\mathbf{x}}
\def\vv{\mathbf{v}}
\title{Reflection on the reflection complexity}
\author[1]{Lubom\'{i}ra Dvo\v{r}\'{a}kov\'{a}}
\author[1]{Edita Pelantov\'{a}}
\affil[1]{Department of Mathematics, Faculty of Nuclear Sciences and 
Physical Engineering, Czech Technical University in Prague\\
Trojanova 13, 120 00 Praha 2, Czech Republic\\
\url{lubomira.dvorakova,edita.pelantova@fjfi.cvut.cz}}
\begin{document}
\maketitle

\begin{abstract}

The factor complexity ${\mathcal C}_\uu$ of a sequence $\uu = u_0u_1u_2 \cdots$ over a finite alphabet counts the number of factors of length $n$  occurring in $\uu$, i.e., ${\mathcal C}_\uu(n) = \#\mathcal{L}_n(\uu)$, where $\mathcal{L}_n(\uu)= \{u_iu_{i+1}\cdots u_{i+n-1}: i \in \N\}$. 

Two factors of $\mathcal{L}_n(\uu)$ are  said to be equivalent if one factor is the reversal of the other one. 
Recently, Allouche et al. introduced the reflection complexity $r_\uu$ which counts the number of non-equivalent factors of $\mathcal{L}_n(\uu)$.  
They formulated the following conjecture: a sequence $\uu$ is eventually periodic if and only if $r_\uu(n+2) = r_\uu(n)$ for some $n \in \N$. Here we prove the conjecture and characterize the sequences for which  $r_\uu(n+2) = r_\uu(n)+1$ for every $n \in \N$ and also the sequences for which the equality is satisfied for every sufficiently large $n \in \N$.

\end{abstract}

\section{Introduction}   
The {\em factor complexity} $\mathcal{C}_\uu$ of a sequence $\uu = u_0u_1u_2\cdots$ over a finite alphabet $\mathcal{A}$ counts the number of distinct blocks of a given length occurring in the sequence~$\uu$. Vaguely speaking, the function $\mathcal{C}_\uu$ expresses the degree of disorder of letters in~$\uu$.  More precisely, $\mathcal{C}_\uu(n) = \#\mathcal{L}_n(\uu)$, where $\mathcal{L}_n(\uu)= \{u_iu_{i+1}\cdots u_{i+n-1}: i \in \N\}$.

The classical result of Morse and Hedlund~\cite{MorseHedlund1938} says that the factor complexity of a sequence $\uu$ which is not eventually periodic satisfies $\mathcal{C}_\uu(n+1)> \mathcal{C}_\uu(n)$ for every $n \in \N$. Morse and Hedlund studied a modification of the factor complexity, nowadays called the {\em abelian complexity}.  Two factors $u, v\in \mathcal{L}_n(\uu)$ are called abelian equivalent if the number of occurrences of a letter $a$ in $u$ and $v$ coincides for every letter $a\in \mathcal{A}$.  The abelian complexity counts the number of non-equivalent factors in  $\mathcal{L}_n(\uu)$; it is  discussed in details in the article \cite{RiSaZa2009}. When considering further equivalences on the set $\mathcal{L}_n(\uu)$, further modifications of the factor complexity can be introduced. Such an approach can be found in~\cite{ChPuZa2017} and \cite{Machacek2025}. A modification of the factor complexity of another kind is proposed by J. M. Cambell et al.~in~\cite{CCR2025}. The authors first apply a reduction to the sequence $\uu$ and then compute the factor complexity of the new sequence.

Here,  we study the {\em reflection complexity} $r_\uu$ as introduced in \cite{AlCaLiShSt2025}. Two factors of $\mathcal{L}_n(\uu)$ are  said to be equivalent if one factor is the reversal of the other one. 
For every $n \in \N$,  $r_\uu(n)$ is the number of non-equivalent factors in  $\mathcal{L}_n(\uu)$. In addition to a number of theoretical results, the article \cite{AlCaLiShSt2025} contains calculations of the reflection complexity for many known sequences. Based on these results and observations, several conjectures are formulated ibidem.  One of them (Conjecture 27) says: a~sequence $\uu$ is not eventually periodic if and only if 
 $r_\uu(n+2)> r_{\uu}(n) \ \text{for every $n \in \N$}$.

In this paper, we prove this conjecture and describe the sequences that are not eventually periodic and have the minimum reflection complexity.

The paper is organized as follows. In Section~\ref{sec:Preliminaries}, we recall  the necessary concepts and earlier results. Section ~\ref{sec:Reflection}   prepares auxiliary tools related to the reflection complexity. The proof of the conjecture itself is the content of Section~\ref{sec:Dukaz}. Aperiodic sequences having the minimum reflection complexity are characterized in Section~\ref{sec:minimal}. Section \ref{sec:comments} comments on another conjecture stated in~\cite{AlCaLiShSt2025} and opens some further questions.

\section{Preliminaries}\label{sec:Preliminaries}
An \textit{alphabet} $\mathcal A$ is a finite set of symbols, called \textit{letters}. 
A \textit{word} $u$ over $\mathcal A$ of \textit{length} $n$ is a finite string $u = u_0 u_1 \cdots u_{n-1}$, where $u_j\in\mathcal A$ for all $j \in \{0,1,\dots, n-1\}$. The length of $u$ is denoted $|u|$ and the set of all finite words over $\A$ is denoted $\A^*$. The set $\A^*$ equipped with concatenation as the operation forms a monoid with the \textit{empty word} $\varepsilon$ as the neutral element. Consider $u, p, s, v \in \A^*$ such that $u=pvs$, then the word $p$ is called a~\textit{prefix}, the word $s$ a~\textit{suffix} and the word $v$ a~\textit{factor} of $u$. 

A~\textit{(right-sided) sequence} $\uu$ over $\A$ is an infinite string $\uu = u_0 u_1 u_2 \cdots$ of letters $u_j \in \A$ for all $j \in \N$. Similarly, a~\textit{left-sided sequence} is an infinite string $\mathbf y=\cdots y_2 y_1 y_0$ of letters $y_j \in \A$ for all $j \in \N$. {A \textit{word} $w$ over $\mathcal A$ is called a~\textit{factor} of the sequence $\uu = u_0 u_1 u_2 \cdots$ if there exists $j \in \mathbb N$ such that $w = u_j u_{j+1} u_{j+2} \cdots u_{j+|w|-1}$. The integer $j$ is called an \textit{occurrence} of the factor $w$ in the sequence $\uu$. If $j=0$, then $w$ is a \textit{prefix} of $\uu$.

The \textit{language} $\mathcal{L}(\uu)$ of a sequence $\uu$ is the set of factors occurring in $\uu$. Obviously, $\mathcal{L}(\uu)= \bigcup_{n\in \N}{\mathcal L}_n(\uu)$. 
The language $\mathcal{L}(\uu)$ is called \textit{closed under reversal} if for each factor $w=w_0w_1\cdots w_{n-1}$, its \textit{reversal} (also called \textit{mirror image}) $\overline{w}=w_{n-1}\cdots w_1 w_0$ is also a factor of $\uu$.

A~factor $w$ of a sequence $\uu$ is \textit{left special} if $aw, bw \in \mathcal{L}(\uu)$ for at least two distinct letters ${a,b} \in \A$. A \textit{right special} factor is defined analogously. The set of left extensions is denoted $\mathrm{Lext}(w)$, i.e., $\mathrm{Lext}(w)=\{aw \in {\mathcal L}(\uu)\ : \ a\in {\mathcal A}\}$. Similarly, $\mathrm{Rext}(w)=\{wa \in {\mathcal L}(\uu) \ :\ a \in {\mathcal A}\}$. The set of both-sided extensions is denoted $\mathrm{Bext}(w)=\{awb \in {\mathcal L}(\uu)\ :\ a,b \in {\mathcal A}\}$.  Given a sequence $\uu$ and $n\in \mathbb N$, the first difference of factor complexity may be computed using the right extensions of factors in the following way, see~\cite{CassaigneRS1997},
$${\mathcal C}_\uu(n+1) - {\mathcal C}_\uu(n)  = \sum_{w \in \mathcal{L}_n(\uu)}\bigl(\#{\rm Rext}(w) - 1\bigr)\,.$$

A word $w$ is a \textit{palindrome} if $w$ is equal to its reversal, i.e., $w=\overline{w}$.
Similarly, the \textit{palindromic complexity} of a sequence $\uu$ is a mapping ${\mathcal P}_\uu:\mathbb N \to \mathbb N$, where $${\mathcal P}_\uu(n)=\#\{w \in {\mathcal L}_n(\uu) \ : \ w={\overline w}\}\,.$$ 

A sequence $\uu$ is \textit{recurrent} if each factor of $\uu$ has infinitely many occurrences in $\uu$. 
Moreover, a recurrent sequence $\uu$ is \textit{uniformly recurrent} if the distances between consecutive occurrences of each factor in $\uu$ are bounded. 
A~sequence $\uu$ is \textit{eventually periodic} if there exist words $w \in \A^*$ and $v \in \A^* \setminus \{\varepsilon\}$ such that $\uu$ can be written as $\uu = wvvv \cdots = wv^\omega$. If $\uu$ is not eventually periodic, $\uu$ is called \textit{aperiodic}.

A \textit{morphism} is a map $\psi: \A^* \to \B^*$ such that $\psi(uv) = \psi(u)\psi(v)$ for all words $u, v \in \A^*$.
The morphism $\psi$ can be naturally extended to a sequence $\uu=u_0 u_1 u_2\cdots$ over $\A$ by setting
$\psi(\uu) = \psi(u_0) \psi(u_1) \psi(u_2) \cdots\,$.



\section{The reflection complexity}\label{sec:Reflection}
Two words $u,v$ over an alphabet $\mathcal A$ are \textit{(reflectively) equivalent} if $u=v$ or $u=\overline{v}$, we write $u\sim_r v$.
The Czech words JELEN (deer in English) and NELEJ (don't pour in English) are equivalent. 
\begin{definition}
Let $\uu$ be a sequence. The \textit{reflection complexity} is a mapping $r_\uu: \mathbb N \to \mathbb  N$, where ${r_\uu}(n)$ is the number of distinct factors of $\uu$ of length $n$, up to equivalence by $\sim_r$.
\end{definition}
Denote $ \langle\uu\rangle_n $  the set of equivalence classes of words in $\mathcal{L}_n(\uu)$, i.e., $t \in \langle\uu\rangle_n $
means that $ t = \{w, \overline{w}\}\cap \mathcal{L}_n(\uu)$ for some $w \in \mathcal{L}(\uu)$.

\begin{definition}  
Let $\uu$ be a sequence and $n \in \mathbb{N}$. Let $t \in  \langle\uu\rangle_n $ and $w \in t$. By $\mathcal{T}(t)$ we denote the number of equivalence classes of the set 
${\rm Bext}(w) \cup {\rm Bext}(\overline{w})$.~\footnote{Recall that ${\rm Bext}(\overline{w})$ is empty if $\overline{w} \notin \mathcal{L}(\uu)$.}

\end{definition}

Obviously, 
\begin{equation}\label{eq:vypocetr(n+2)}
   r_\uu(n) = \sum_{t \in  \langle\uu\rangle_n } 1 \qquad \text{and}\qquad r_\uu(n+2) = \sum_{t \in  \langle\uu\rangle_n } \mathcal{T}(t). 
\end{equation}

Let us study the value ${\mathcal T}(t)$ for particular classes $t$.
\begin{lemma}\label{lem:simpleObservations}  Let $\uu$ be a sequence over an alphabet $\mathcal A$ and $n \in \mathbb{N}$. Let $t \in \langle\uu\rangle_n $ and $w\in t$. 
\begin{enumerate}
    \item If  $w$ is a prefix of $\uu$ and neither $w$ nor $\overline{w}$ have an occurrence  $j>0$, then $\mathcal{T}(t)=0$. Otherwise, $\mathcal{T}(t)\geq 1$.     
    \item  If  $cw$ is a right special factor of $\uu$ for some $c \in \mathcal{A}$. Then $\mathcal{T}(t)\geq \#{\rm Rext}(w) \geq 2$. Moreover,  
    $\mathcal{T}(t)= 2$ if and only if there exist two letters $a,b \in \mathcal{A}$, $a\neq b$, such that $\{cwa, cwb\}\subset {\rm Bext}(w)\cup{\rm Bext}(\overline{w})\subset \{cwa, cwb, a\overline{w}c, b\overline{w}c\}$. 

  \item  If  $wc$ is a left special factor of $\uu$ for some $c \in \mathcal{A}$. Then $\mathcal{T}(t)\geq \#{\rm Lext}(w) \geq 2$. Moreover,  
    $\mathcal{T}(t)= 2$ if and only if there exist two letters $a,b \in \mathcal{A}$, $a\neq b$, such that $\{awc, bwc\}\subset {\rm Bext}(w)\cup{\rm Bext}(\overline{w})\subset \{awc, bwc, c\overline{w}a, c\overline{w}b\}$.

\end{enumerate}

\end{lemma}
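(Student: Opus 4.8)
The plan is to establish the three items by carefully tracking how the equivalence relation $\sim_r$ interacts with the operation of adjoining letters on both sides, i.e.\ by analysing the natural map from $\mathrm{Bext}(w)\cup\mathrm{Bext}(\overline w)$ to its set of $\sim_r$-classes. Note the key symmetry: $\overline{awb}=\overline b\,\overline w\,\overline a$, so reversal swaps the roles of left and right extension while also reversing the letters. Thus the $\sim_r$-classes inside $\mathrm{Bext}(w)\cup\mathrm{Bext}(\overline w)$ are exactly the pairs $\{awb,\ \overline b\,\overline w\,\overline a\}$ (intersected with $\mathcal L(\uu)$), together with the palindromic singletons $awb$ with $awb=\overline{awb}$.

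For item (1): if $w$ is a prefix of $\uu$ with no occurrence $j>0$ and $\overline w$ likewise has no occurrence, then every extension $awb$ would force an occurrence of $w$ at some position $j\ge 1$ and every $a\overline w b$ forces an occurrence of $\overline w$; so $\mathrm{Bext}(w)=\mathrm{Bext}(\overline w)=\varnothing$ and $\mathcal T(t)=0$. Conversely, if $w$ (or $\overline w$) occurs at some $j>0$, that occurrence is flanked on both sides inside $\uu$, giving a word in $\mathrm{Bext}(w)$ (resp.\ $\mathrm{Bext}(\overline w)$), hence $\mathcal T(t)\ge 1$. This is routine.

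For item (2) (item (3) is fully symmetric via reversal, so I would state it and say ``by the same argument applied to $\overline w$''): suppose $cw$ is right special, so $\#\mathrm{Rext}(cw)=\#\{b: cwb\in\mathcal L(\uu)\}\ge 2$; say $cwa,cwb\in\mathcal L(\uu)$ with $a\ne b$. These two words lie in $\mathrm{Bext}(w)$ and are $\sim_r$-inequivalent to each other (they have the same first letter $c$ but different last letters, and a reversal would put $c$ at the end, so $cwa\sim_r cwb$ would force $cwa=cwb$). More generally each distinct element of $\mathrm{Rext}(w)$, completed to the left by $c$, gives a word $cw b$; two such are $\sim_r$-equivalent only if equal, so these account for $\#\mathrm{Rext}(w)\ge\#\mathrm{Rext}(cw)\ge 2$ distinct classes, giving $\mathcal T(t)\ge\#\mathrm{Rext}(w)\ge 2$. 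For the ``moreover'' part, $\mathcal T(t)=2$ forces $\#\mathrm{Rext}(w)=2$, say $\mathrm{Rext}(w)=\{wa,wb\}$, so $cwa,cwb$ are in $\mathrm{Bext}(w)$ and already occupy both classes; hence every other element of $\mathrm{Bext}(w)\cup\mathrm{Bext}(\overline w)$ must be $\sim_r$-equivalent to $cwa$ or $cwb$. An element of $\mathrm{Bext}(w)$ has the form $c'w a$ or $c'wb$; being equivalent to $cwa$ (which ends in $a$) via reversal means $c'wa=\overline{cwa}=\overline a\,\overline w\,c$ — but a reversal can only match $cwa$ if $cwa$ is a palindrome, which is impossible since $cwa\sim_r cwb$ would then follow; so in fact $c'=c$, i.e.\ $\mathrm{Bext}(w)\subseteq\{cwa,cwb\}$. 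An element of $\mathrm{Bext}(\overline w)$ has the form $a'\overline w b'$, and it is $\sim_r$-equivalent to $cwa$ or $cwb$ iff its reversal $\overline{b'} w \overline{a'}$ equals $cwa$ or $cwb$, i.e.\ $\overline{b'}=c$ and $\overline{a'}\in\{a,b\}$; so $\mathrm{Bext}(\overline w)\subseteq\{a\overline w c,\ b\overline w c\}$ (writing $\overline a=a,\overline b=b$ etc.\ for single letters). Combining, $\{cwa,cwb\}\subseteq\mathrm{Bext}(w)\cup\mathrm{Bext}(\overline w)\subseteq\{cwa,cwb,a\overline w c,b\overline w c\}$. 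The converse inclusion-chain trivially yields $\mathcal T(t)=2$ since all four listed words fall into the two classes $\{cwa,a\overline w c\}$ and $\{cwb,b\overline w c\}$.

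The main obstacle, and the only place needing genuine care, is the palindrome bookkeeping in the ``moreover'' direction of items (2) and (3): one must rule out that a reversal-coincidence collapses what look like two classes into one, and must handle the possibility $w=\overline w$ (so that $\mathrm{Bext}(w)$ and $\mathrm{Bext}(\overline w)$ coincide) — in that case $cwa$ and $a\overline w c=awc$ are genuinely different words unless $cwa$ is itself a palindrome, and the stated inclusion still holds. I would dispatch these cases by a short lemma: for single letters $c,a,b$ and a word $w$ of length $n\ge 1$ with $cwa,cwb\in\mathcal L(\uu)$ and $a\ne b$, the word $cwa$ is not a palindrome (else its last letter $a$ equals its first letter $c$, and symmetrically for $cwb$, forcing $a=c=b$). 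This keeps the argument clean and finishes the proof.
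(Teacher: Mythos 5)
Your items (1) and (3), and the characterization of $\mathcal{T}(t)=2$ in item (2), are correct and follow essentially the paper's route; the paper gets the ``moreover'' inclusion more cleanly by noting that the equivalence class of $cwa$ inside $\mathrm{Bext}(w)\cup\mathrm{Bext}(\overline{w})$ can only be $\{cwa\}$ or $\{cwa,a\overline{w}c\}$, and likewise for $cwb$, which sidesteps your case analysis. (Incidentally, that case analysis is not watertight: the inference ``a reversal can only match $cwa$ if $cwa$ is a palindrome, which is impossible since $cwa\sim_r cwb$ would then follow'' is not valid, and your intermediate claim $\mathrm{Bext}(w)\subseteq\{cwa,cwb\}$ can fail when $a=c$ and $w=\overline{w}$, where $bwa=b\overline{w}c$ may occur; the final two-sided inclusion still holds, but only because that word is absorbed into the right-hand set.)

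The genuine gap is in your justification of the middle inequality $\mathcal{T}(t)\geq \#\mathrm{Rext}(w)$. You assert that ``each distinct element of $\mathrm{Rext}(w)$, completed to the left by $c$, gives a word $cwb$'' --- but a right extension $wb'$ of $w$ need not be left-extendable by the particular letter $c$; only the elements of $\mathrm{Rext}(cw)$ come with that completion. What your argument actually establishes is $\mathcal{T}(t)\geq \#\mathrm{Rext}(cw)\geq 2$ (distinct words $cwx,cwy$ sharing the first letter $c$ are never $\sim_r$-equivalent), which is also all that the paper's own one-line proof establishes and all that is used later. The stronger inequality with $\mathrm{Rext}(w)$ cannot be salvaged, because it is false as stated: for $\uu=(adccdbcda)^{\omega}$ and $w=d$, the factor $cd$ is right special, $\mathrm{Bext}(d)=\{adc,cdb,cda\}$ and $\#\mathrm{Rext}(d)=3$, yet $adc\sim_r cda$ gives $\mathcal{T}(t)=2$. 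So this step of your proof would fail; replace $\#\mathrm{Rext}(w)$ by $\#\mathrm{Rext}(cw)$ (and $\#\mathrm{Lext}(w)$ by $\#\mathrm{Lext}(wc)$ in item (3)) and the argument goes through.
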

\begin{proof}
\begin{enumerate}
\item The first statement follows from the fact that for any factor $w$ of $\uu$ holds ${\rm Bext}(w)\cup{\rm Bext}(\overline{w})=\emptyset$ if and only if $w$ or $\overline w$ is a prefix of $\uu$ and neither $w$ nor $\overline{w}$ have an occurrence  $j>0$.
\item If $cw$ is a right special factor of $\uu$, then there exist $a,b \in \mathcal A$, $a\not =b$, such that $cwa, cwb \in {\mathcal L}(\uu)$. Clearly, $cwa$ and $cwb$ are not equivalent. It follows that $\mathcal{T}(t)\geq \#{\rm Rext}(w) \geq 2$. If $\mathcal{T}(t)=2$, then there are only two equivalence classes of the set ${\rm Bext}(w)\cup{\rm Bext}(\overline{w})$. The class containing $cwa$ is of the form $\{cwa\}$ or $\{cwa, a\overline{w}c\}$, similarly for the class containing $cwb$. Hence, $\{awc, bwc\}\subset {\rm Bext}(w)\cup{\rm Bext}(\overline{w})\subset \{awc, bwc, c\overline{w}a, c\overline{w}b\}$.
\item The proof is analogous to the proof of Item 2.
\end{enumerate}
\end{proof}

\begin{definition}\label{def:Index} Let $\uu=u_0u_1u_2 \cdots$ be a sequence. For each $n \in \mathbb{N}$, we denote ${\rm Ind}_\uu(n)$ the minimum $i \in \N$ having the property: 
  there exists $j >i$ such that the factors of length $n+1$ occurring in $\uu$ at positions $i$ and $j$ are equivalent. 
\end{definition}

For each $n\in \mathbb N$, ${\rm Ind}_\uu(n)$ is well defined. It suffices to realize that since $\uu$ is an infinite sequence and there are only finitely many factors of length $n+1$, there exists an index $i\in \mathbb N$ such that the factor of length $n+1$ occurring at $i$ has another occurrence $j >i$ in $\uu$ or its reversal has another occurrence $j >i$ in $\uu$. 
Obviously, if  ${\rm Ind}_\uu(n-1)>0$, then ${\rm Ind}_\uu(n)>0$, too.

\begin{lemma}\label{lem:prvniOpakovany} Let $\uu$ be a sequence and $n \in \mathbb{N}$. Let $v$ be the factor of length $n$ occurring at  $i={\rm Ind}_\uu(n)$.  
 If  $i>0$, then $\mathcal{T}(t_{\rm Ind})\geq 2$, where $t_{\rm Ind}$ is the equivalence class containing $v$.
    
\end{lemma}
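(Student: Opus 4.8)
The plan is to exploit the minimality of $i = {\rm Ind}_\uu(n)$. Let $v$ be the factor of length $n$ at position $i$, and let $w$ denote the factor of length $n+1$ occurring at position $i$, so that $v$ is the length-$n$ prefix of $w$. By definition of ${\rm Ind}_\uu$, there is an occurrence $j > i$ such that the factor of length $n+1$ at position $j$ is equivalent to $w$; that factor is therefore either $w$ itself or $\overline{w}$. In both cases we obtain a second occurrence — of $w$ or of $\overline{w}$ — strictly to the right of $i$, hence at a position $\geq 1$, so in particular the letter preceding that occurrence exists in $\uu$. This shows $w$ or $\overline{w}$ admits at least one left extension $aw$ or $a\overline{w}$ with $a \in \mathcal{A}$. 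Note that $w \in {\rm Rext}(v)$ and, if $\overline{w} \in \mathcal{L}(\uu)$, then $\overline{w} \in {\rm Lext}(\overline{v})$; either way we will be building elements of ${\rm Bext}(v) \cup {\rm Bext}(\overline{v})$.

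The key step is then a dichotomy according to which case occurs. \emph{Case 1:} the length-$(n+1)$ factor at position $j$ equals $w$. Then $w$ itself has an occurrence $j > i$, so some $aw \in {\rm Bext}$-material arises: extend $w$ to the left (letter at position $j-1$) and to the right (any right extension of $w$, which exists since $w$ is not a suffix of a left-sided object — $\uu$ is right-infinite, so $w$ extends to the right) to get a both-sided extension $awb \in {\rm Bext}(v)$. Separately, since $i > 0$, the factor $w$ at position $i$ has a left extension $a'w$ as well. If we can choose these so that $awb$ and the extension coming from position $i$ fall into different $\sim_r$-classes of ${\rm Bext}(v) \cup {\rm Bext}(\overline{v})$, we are done. \emph{Case 2:} the length-$(n+1)$ factor at position $j$ equals $\overline{w}$. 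Then both $w$ (at $i$) and $\overline{w}$ (at $j$) occur in $\uu$, each at a position $\geq 1$ (for $\overline w$ because $j > i \geq 0$, i.e. $j \geq 1$; for $w$ because $i > 0$), so each of $w, \overline{w}$ has a left extension. Extending $w$ at position $i$ to a both-sided extension $awb$ and $\overline{w}$ at position $j$ to a both-sided extension $c\overline{w}d$ gives two elements of ${\rm Bext}(v) \cup {\rm Bext}(\overline{v})$; it remains to argue they are not $\sim_r$-equivalent.

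The main obstacle is precisely this non-equivalence argument: I must produce \emph{two} inequivalent both-sided extensions, not just one extension. The clean way around it is to observe that since $\uu$ is right-infinite, $w$ (at position $i > 0$, so with a left neighbour) sits inside some both-sided extension $awb$; and from the repetition at $j$ we get either a second, differently-left-extended copy of $w$ or a left-extended copy of $\overline{w}$ — and one checks that in neither configuration can all the resulting both-sided extensions collapse to a single $\sim_r$-class, since that would force $v$ to have a unique both-sided extension pattern incompatible with the existence of an occurrence of $w$ (or $\overline w$) at two different positions together with $i>0$ guaranteeing a genuine left neighbour. Concretely, one shows ${\rm Bext}(v) \cup {\rm Bext}(\overline{v})$ contains two distinct elements $x, y$ with $x \neq \overline{y}$; a short case check on the lengths-$(n+1)$ factors at $i$ and $j$ (both equal to one of $w, \overline{w}$) together with the left neighbours forced by $i > 0$ and $j \geq 1$ settles every case. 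Hence $\mathcal{T}(t_{\rm Ind}) \geq 2$.
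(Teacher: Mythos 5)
Your overall strategy is the paper's: use $i>0$ to obtain one both-sided extension $xvd$ of $v$ from the left neighbour $x=u_{i-1}$, use the repetition at $j$ to obtain a second one, and show the two are $\sim_r$-inequivalent. But that last step --- which you yourself identify as ``the main obstacle'' --- is never actually carried out, and the justification you sketch for it is invalid. You claim that a collapse to a single class ``would force $v$ to have a unique both-sided extension pattern incompatible with the existence of an occurrence of $w$ (or $\overline{w}$) at two different positions together with $i>0$ guaranteeing a genuine left neighbour''. That is false as stated: in $\uu=(ab)^{\omega}$ with $n=1$, the factor $v=b$ at position $i=1$ has $w=ba$ occurring at positions $1$ and $3$, both preceded by the letter $a$, and ${\rm Bext}(v)\cup{\rm Bext}(\overline{v})=\{aba\}$ is a single class. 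So the hypotheses you actually use (a repetition at some $j>i$, plus $i>0$) do not imply the conclusion.

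What saves the lemma is the \emph{minimality} of $i={\rm Ind}_\uu(n)$, applied at position $i-1$: since $i-1<i$, the length-$(n+1)$ factor $xv$ occurring at $i-1$ and its reversal $\overline{v}x$ have no occurrence $\geq i$. Hence in your Case 1 the letter $y=u_{j-1}$ preceding the second occurrence of $w=vd$ satisfies $y\neq x$ (otherwise $xv$ would occur at $j-1\geq i$), and $xvd$, $yvd$ are inequivalent; in your Case 2 the letter $y$ following the occurrence of $\overline{v}$ at position $j+1\geq i$ satisfies $y\neq x$ (otherwise $\overline{v}x$ would occur at $j+1\geq i$), and $xvd$, $d\overline{v}y$ are inequivalent. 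You announce in your first sentence that you will ``exploit the minimality of $i$'', but minimality is never invoked anywhere in the body of your argument; this is precisely the missing idea. (A smaller slip: an element of ${\rm Bext}(v)$ has length $n+2$, so the extension arising from the occurrence of $w=vd$ at $j$ is $u_{j-1}vd$; your ``$awb$'' with $w$ of length $n+1$ in the middle has length $n+3$ and is not an element of ${\rm Bext}(v)$.)
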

\begin{proof} Let  $vd$ and $xv $ be the factors of length $n+1$ occurring at the positions $i$ and   $i-1$, respectively.  By the definition of $i$, the factors $xv$ and $\overline{v}x$ have no occurrence $\geq i$. 

Let $j>i$ be the index corresponding to $i={\rm Ind}_\uu(n)$ in Definition $\ref{def:Index}$. 
The index $j-1 >i-1$ is an occurrence of the factor $yvd$  or $j >i$ is an occurrence of $ d\overline{v}y$ for some $y\in \mathcal{A}$. Hence $y\neq x$.   Therefore, ${\rm Bext}(v)$ contains two non-equivalent factors $xvd, yvd $  or ${\rm Bext}(v)\cup {\rm Bext}(\overline{v})$ contains two non-equivalent factors $xvd, d\overline{v}y $.  Consequently,  $\mathcal{T}(t_{\rm Ind})\geq 2$. 
    
\end{proof}

The following corollary is proven in~\cite{AlCaLiShSt2025} as Theorem 23. We provide here another simple proof using our new formalism. 
\begin{corollary}  Let $\uu$ be a sequence. Then $r_{\uu}(n+2)\geq r_{\uu}(n)$ for every $n \in \N$.       
\end{corollary}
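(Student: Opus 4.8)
The plan is to read off both sides of the desired inequality from \eqref{eq:vypocetr(n+2)}: since $r_\uu(n) = \sum_{t \in \langle\uu\rangle_n} 1$ and $r_\uu(n+2) = \sum_{t \in \langle\uu\rangle_n} \mathcal{T}(t)$, the statement $r_\uu(n+2) \geq r_\uu(n)$ is equivalent to
$$\sum_{t \in \langle\uu\rangle_n} \bigl(\mathcal{T}(t) - 1\bigr) \geq 0 .$$
First I would apply Item~1 of Lemma~\ref{lem:simpleObservations}: it tells us that $\mathcal{T}(t) \geq 1$ for every class $t$, with the sole possible exception of the class $t_0$ of the length-$n$ prefix $w$ of $\uu$, and even then only in the special situation where neither $w$ nor $\overline{w}$ occurs in $\uu$ at a position $j > 0$. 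So every summand above is nonnegative except possibly one, which then equals $-1$; if that exceptional summand does not arise, the inequality is immediate.

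It remains to handle the case in which the length-$n$ prefix $w$ is exceptional, i.e.\ neither $w$ nor $\overline{w}$ occurs at a positive position. I would first observe that in this case necessarily ${\rm Ind}_\uu(n) > 0$: were it $0$, the length-$(n+1)$ prefix of $\uu$ would be equivalent to some factor of length $n+1$ occurring at a position $j > 0$, and a short inspection of the two possibilities (the prefix itself reoccurring, or its reversal occurring) shows that $w$ or $\overline{w}$ would then occur at a positive position, a contradiction. Now put $i = {\rm Ind}_\uu(n) > 0$ and let $v$ be the length-$n$ factor at position $i$, with class $t_{\rm Ind}$. Lemma~\ref{lem:prvniOpakovany} gives $\mathcal{T}(t_{\rm Ind}) \geq 2$; and since $v$ occurs at the positive position $i$ while neither $w$ nor $\overline{w}$ does, $v$ is not $\sim_r$-equivalent to $w$, so $t_{\rm Ind} \neq t_0$. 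Therefore the summand $\mathcal{T}(t_{\rm Ind}) - 1 \geq 1$ exactly offsets the unique summand $\mathcal{T}(t_0) - 1 = -1$, and the total is again nonnegative.

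The one place that needs care — and the only real obstacle — is this boundary bookkeeping around the prefix of $\uu$: pinning down that there is at most one class with $\mathcal{T}(t) = 0$, that it is the class of the length-$n$ prefix, and that when it is present the class $t_{\rm Ind}$ produced by Lemma~\ref{lem:prvniOpakovany} is genuinely distinct from it (which is exactly why the strict inequality $j > i$ in the definition of ${\rm Ind}_\uu$ is used). Everything else is a direct substitution into the two identities in \eqref{eq:vypocetr(n+2)}.
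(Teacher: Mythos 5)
Your proof is correct and follows essentially the same route as the paper's: both rely on \eqref{eq:vypocetr(n+2)}, Item~1 of Lemma~\ref{lem:simpleObservations} to bound all but possibly one class below by $1$, and Lemma~\ref{lem:prvniOpakovany} to produce a class with $\mathcal{T}\geq 2$ that compensates the exceptional prefix class. The only (cosmetic) difference is that you case-split on whether the length-$n$ prefix class is exceptional rather than on ${\rm Ind}_\uu(n-1)=0$ versus ${\rm Ind}_\uu(n-1)>0$, and you spell out explicitly that $t_{\rm Ind}$ differs from the exceptional class, a point the paper leaves implicit.
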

    
\begin{proof} 
The inequality holds trivially for $n=0$.

Let $i = {\rm Ind}_\uu(n-1)$, where $n\geq 1$. 

If $i=0$, then by Item 1 of Lemma \ref{lem:simpleObservations},   $\mathcal{T}(t)\geq 1$ for every class $t \in \langle \uu \rangle_n$. Hence \eqref{eq:vypocetr(n+2)} implies 
$r_{\uu}(n+2)\geq r_{\uu}(n)$ for $n\geq 1$. 

If $i={\rm Ind}_\uu(n-1)>0$, then ${\rm Ind}_\uu(n)>0$. By Lemma  \ref{lem:prvniOpakovany}, there exists an equivalence class $t_{\rm Ind}\in \langle \uu \rangle_n$ such that  $\mathcal{T}(t_{\rm Ind})\geq 2$. By Item 1 of Lemma \ref{lem:simpleObservations}, all classes $t\in \langle \uu \rangle_n$ except one have $\mathcal{T}(t)\geq 1$. Hence 
$r_{\uu}(n+2)\geq r_{\uu}(n)$ for $n\geq 1$. 
    
\end{proof}

\section{The reflection complexity and periodicity}\label{sec:Dukaz}

The aim of this section is to prove a conjecture on the characterization of eventually periodic sequences by reflection complexity; it is Conjecture 27 in~\cite{AlCaLiShSt2025}.
We prove it here as Theorem~\ref{thm:Conjectur27}. 
Let us recall a resembling characterization of eventually periodic sequences by factor complexity.

\begin{theorem}[\cite{MorseHedlund1938}]\label{thm:MorseHedlund} A~sequence $\uu$ is eventually periodic if and only if there exists $n \in \N$ such that ${\mathcal C}_\uu(n+1) ={\mathcal C}_\uu(n)$.  
\end{theorem}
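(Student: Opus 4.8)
The plan is to prove the two implications separately, using as the main tool the first-difference formula $\mathcal{C}_\uu(n+1) - \mathcal{C}_\uu(n) = \sum_{w \in \mathcal{L}_n(\uu)}\bigl(\#\mathrm{Rext}(w) - 1\bigr)$ recalled in Section~\ref{sec:Preliminaries}. Note first that every summand is non-negative (each factor of $\uu$, being a factor of an infinite sequence, has at least one right extension), so $\mathcal{C}_\uu$ is non-decreasing.

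For the ``only if'' direction, suppose $\uu = wv^\omega$ with $v \neq \varepsilon$. A factor of length $m$ is determined by a starting position, and the positions $i$ and $i+|v|$ with $i \geq |w|$ yield the same factor; hence $\mathcal{C}_\uu(m) \leq |w| + |v|$ for every $m$. A non-decreasing, bounded sequence of non-negative integers is eventually constant, so $\mathcal{C}_\uu(n+1) = \mathcal{C}_\uu(n)$ holds for all sufficiently large $n$, in particular for some $n \in \N$.

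For the ``if'' direction, fix $n$ with $\mathcal{C}_\uu(n+1) = \mathcal{C}_\uu(n)$. By the formula and non-negativity of the summands, $\#\mathrm{Rext}(w) = 1$ for every $w \in \mathcal{L}_n(\uu)$, i.e.\ $\uu$ has no right special factor of length $n$. The first key step is to propagate this to all larger lengths: there is no right special factor of length $m \geq n$. Indeed, if some $w$ of length $m \geq n$ had two extensions $wa, wb \in \mathcal{L}(\uu)$ with $a \neq b$, then the suffix $s$ of $w$ of length $n$ would satisfy $sa, sb \in \mathcal{L}(\uu)$ (as suffixes of $wa$ and $wb$ respectively), making $s$ a right special factor of length $n$ — a contradiction. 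Consequently every factor of length $n$ has a unique one-letter right extension, which gives a well-defined map $f \colon \mathcal{L}_n(\uu) \to \mathcal{L}_n(\uu)$ sending $v = v_0v_1\cdots v_{n-1}$ to $v_1\cdots v_{n-1}c$, where $c$ is the unique letter with $vc \in \mathcal{L}(\uu)$.

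The second key step converts this into periodicity. Writing $v^{(i)} = u_i u_{i+1}\cdots u_{i+n-1}$ for the factor of length $n$ occurring at position $i$, uniqueness of the right extension forces $f(v^{(i)}) = v^{(i+1)}$ for every $i \in \N$, so $(v^{(i)})_{i \geq 0}$ is the forward orbit of $v^{(0)}$ under $f$ on the finite set $\mathcal{L}_n(\uu)$. Hence there are indices $p < q$ with $v^{(p)} = v^{(q)}$, and applying $f$ repeatedly yields $v^{(p+k)} = v^{(q+k)}$ for all $k \geq 0$; comparing first letters gives $u_{p+k} = u_{q+k}$ for all $k \geq 0$. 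Therefore $\uu = (u_0\cdots u_{p-1})(u_p\cdots u_{q-1})^\omega$ is eventually periodic. The only delicate point is the propagation step (no right special factor of length $\geq n$), and even that is immediate once one passes to the length-$n$ suffix; the rest is the classical Morse--Hedlund orbit argument, streamlined through the right-extension formula already stated in the paper, so I anticipate no real obstacle.
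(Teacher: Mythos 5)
The paper does not prove this statement: it is the classical Morse--Hedlund theorem, quoted with a citation to \cite{MorseHedlund1938}, so there is no in-paper proof to compare against. Your argument is correct and is the standard one. Two small remarks. First, your ``propagation'' step (no right special factor of any length $m\geq n$) is true but never actually used: the well-definedness of $f$ and the identity $f(v^{(i)})=v^{(i+1)}$ need only the case $m=n$, so you could delete that paragraph without loss. Second, the phrase ``comparing first letters'' degenerates when $n=0$ (the windows $v^{(i)}$ are empty); in that case $\#\mathrm{Rext}(\varepsilon)=1$ already says the alphabet of $\uu$ reduces to a single letter and $\uu$ is periodic outright, so the edge case is trivial, but it is worth a clause. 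Everything else --- the boundedness estimate $\mathcal{C}_\uu(m)\leq |w|+|v|$ for the forward direction, the non-negativity of the summands in the first-difference formula, and the finite-orbit argument producing $u_{p+k}=u_{q+k}$ for all $k\geq 0$ --- is sound.
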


Let us also recall a result from~\cite{AlCaLiShSt2025} on the relation of ${\mathcal C}_\uu$ and $r_\uu$ for eventually periodic sequences.
\begin{theorem}[\cite{AlCaLiShSt2025}, Theorem 39]\label{thm:ProPeriodic}  A sequence $\uu$ is eventually periodic if and only if both sequences $(r_\uu(2n))_{n\in \N}$ and $(r_\uu(2n + 1))_{n\in \N}$ are eventually constant.    
\end{theorem}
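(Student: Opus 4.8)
The plan is to trap the reflection complexity between $\tfrac12\,\mathcal{C}_\uu$ and $\mathcal{C}_\uu$, and then import the known behaviour of the factor complexity. The first step is the elementary double inequality
\[
\tfrac12\,\mathcal{C}_\uu(n)\;\le\;r_\uu(n)\;\le\;\mathcal{C}_\uu(n)\qquad\text{for every }n\in\N,
\]
which holds because $\mathcal{L}_n(\uu)$ is partitioned into the $r_\uu(n)$ classes of $\langle\uu\rangle_n$, and every such class $\{w,\overline{w}\}\cap\mathcal{L}_n(\uu)$ has cardinality $1$ or $2$ (it collapses to $\{w\}$ exactly when $w$ is a palindrome or $\overline{w}\notin\mathcal{L}_n(\uu)$).

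For the implication ``$\uu$ eventually periodic $\Rightarrow$ both $(r_\uu(2n))_{n\in\N}$ and $(r_\uu(2n+1))_{n\in\N}$ eventually constant'', I would first recall that $\mathcal{C}_\uu$ is non-decreasing (by the Cassaigne formula from Section~\ref{sec:Preliminaries}, since every factor of an infinite sequence has at least one right extension), and that, by Theorem~\ref{thm:MorseHedlund}, eventual periodicity yields some $n$ with $\mathcal{C}_\uu(n+1)=\mathcal{C}_\uu(n)$; the same formula then shows that $\mathcal{C}_\uu$ is constant from that point on, so $\mathcal{C}_\uu(n)\le M$ for all $n$ and some $M\in\N$. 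The right-hand estimate above gives $r_\uu(n)\le M$ for all $n$. Finally, the inequality $r_\uu(n+2)\ge r_\uu(n)$ (the corollary established in Section~\ref{sec:Reflection}) makes each of $(r_\uu(2n))_{n\in\N}$ and $(r_\uu(2n+1))_{n\in\N}$ a non-decreasing, bounded sequence of natural numbers, hence eventually constant.

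For the converse, assume both subsequences are eventually constant; then $r_\uu$ is bounded, say $r_\uu(n)\le M$ for all $n$. The left-hand estimate above yields $\mathcal{C}_\uu(n)\le 2M$ for all $n$, so the non-decreasing sequence $(\mathcal{C}_\uu(n))_{n\in\N}$ is bounded and therefore satisfies $\mathcal{C}_\uu(n+1)=\mathcal{C}_\uu(n)$ for some $n$; by Theorem~\ref{thm:MorseHedlund} this forces $\uu$ to be eventually periodic.

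I do not expect a serious obstacle here: the statement reduces to Morse--Hedlund through the two-sided bound $\tfrac12\,\mathcal{C}_\uu\le r_\uu\le\mathcal{C}_\uu$ together with the monotonicity $r_\uu(n+2)\ge r_\uu(n)$. The only point that needs a little care is the forward direction, where mere boundedness of $r_\uu$ is not enough to conclude that a subsequence is eventually constant; the even/odd monotonicity supplied by the corollary is precisely what bridges that gap.
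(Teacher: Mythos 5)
Your argument is correct. Note, however, that the paper does not prove this statement at all: it is quoted verbatim as Theorem~39 of the cited reference and used as a black box (only the direction ``eventually periodic $\Rightarrow$ eventually constant subsequences'' is actually invoked, in the proof of Theorem~\ref{thm:Conjectur27}). So there is no internal proof to compare against; what you have supplied is a self-contained elementary derivation. Your two-sided bound $\tfrac12\,\mathcal{C}_\uu(n)\le r_\uu(n)\le \mathcal{C}_\uu(n)$ is valid (each class of $\langle\uu\rangle_n$ has one or two elements), the monotonicity $r_\uu(n+2)\ge r_\uu(n)$ is exactly the corollary proved in Section~\ref{sec:Reflection}, and the reduction to Morse--Hedlund in both directions goes through: boundedness of $\mathcal{C}_\uu$ for eventually periodic $\uu$ gives boundedness of $r_\uu$, which together with the even/odd monotonicity yields eventual constancy; conversely, eventual constancy of both subsequences bounds $r_\uu$, hence $\mathcal{C}_\uu$, forcing $\mathcal{C}_\uu(n+1)=\mathcal{C}_\uu(n)$ somewhere. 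You correctly identified the one delicate point, namely that boundedness of $r_\uu$ alone would not suffice in the forward direction without the monotonicity supplied by the corollary. This is a clean way to make the paper self-contained on this point, at the modest cost of relying on $r_\uu(n+2)\ge r_\uu(n)$, which the paper in any case establishes independently.
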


The following proposition seems to be ``folklore'' in combinatorics on words. Nevertheless, we have not found its proof anywhere. For reader's convenience, we provide its proof in Appendix.

 \begin{proposition}\label{pro:RSaLS} Let $\uu$ be an aperiodic sequence over an alphabet $\mathcal{A}$. 
 
 \begin{enumerate}
 
 \item 
 There exists a right-sided sequence ${\bf x}_L = x_0x_1x_2 \cdots$ and two different letters $a_L,b_L \in \mathcal{A}$ such that for every prefix $f$  of   ${\bf x}_L$    both factors $a_Lf$ and $b_Lf$ occur infinitely many times in $\uu$.  
 
 \item 
 There exists a left-sided sequence ${\bf y}_R = \ldots y_2y_1y_0 $ and two different letters $a_R,b_R \in \mathcal{A}$ such that for every suffix $g$  of   ${\bf y}_R$    both factors $ga_R$ and $gb_R$ occur infinitely many times in $\uu$.  
\end{enumerate}
 
 \end{proposition}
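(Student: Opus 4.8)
The plan is to prove part 1 (part 2 being symmetric, obtained by reading $\uu$ "backwards" in the sense of reversals, or by an entirely analogous argument). The key observation is that since $\uu$ is aperiodic, by \Cref{thm:MorseHedlund} the factor complexity is strictly increasing, so for every $n$ there is at least one right special factor of length $n$; equivalently, $\uu$ has arbitrarily long right special factors. I want to build an infinite right-sided sequence ${\bf x}_L$ all of whose prefixes are "strongly left special" in the sense that a \emph{fixed} pair of letters $a_L, b_L$ can be prepended, and moreover that both extensions occur infinitely often (not merely occur). The natural device is a tree/compactness argument combined with the pigeonhole principle on the finite alphabet.

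First I would set up the tree whose nodes at level $n$ are the factors $f \in \mathcal{L}_n(\uu)$ that are left special \emph{with both left extensions recurrent}, i.e.\ such that there exist distinct $a,b$ with $af, bf$ each occurring infinitely often in $\uu$; call such $f$ \emph{bi-recurrent-left-special}. The first step is to check this set is nonempty at every level and closed under taking suffixes that remain "deep enough": concretely, if $f$ is bi-recurrent-left-special of length $n+1$, then its length-$n$ suffix $f'$ is also bi-recurrent-left-special, since the infinitely many occurrences of $af$ and $bf$ give infinitely many occurrences of $af'$ and $bf'$. So these factors form an infinite, suffix-closed set; organizing them into a tree where $f'$ is the parent of $f$ when $f' $ is the length-$(|f|-1)$ suffix of $f$, we get an infinite, finitely-branching tree, and by König's lemma it has an infinite branch, which is exactly a right-sided sequence ${\bf x}_L = x_0 x_1 x_2 \cdots$ every prefix of which is bi-recurrent-left-special. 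The remaining point is that the pair of witnessing letters $(a,b)$ might \emph{change} along the branch; but there are only finitely many unordered pairs of letters in $\mathcal{A}$, so by pigeonhole some single pair $\{a_L,b_L\}$ works for infinitely many prefixes of ${\bf x}_L$, and since being bi-recurrent-left-special with a \emph{given} pair is inherited by suffixes, that pair in fact works for \emph{every} prefix of ${\bf x}_L$. This gives the statement.

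Alternatively, and perhaps more cleanly, I would avoid König's lemma by a direct diagonal construction: enumerate pairs of letters; for each pair $\{a,b\}$ let $S_{a,b}$ be the set of words $f$ with $af$ and $bf$ both occurring infinitely often in $\uu$. Aperiodicity plus Morse--Hedlund forces $\bigcup_{a,b} S_{a,b}$ to contain words of every length (a long-enough left special factor must, by finiteness of the alphabet, have two of its left extensions recurrent). Hence some fixed $S_{a_L,b_L}$ is infinite; being suffix-closed and infinite over a finite alphabet, it contains arbitrarily long words, all of whose suffixes of each shorter length are also in it, so a standard compactness/limit argument yields a single right-sided sequence all of whose prefixes lie in $S_{a_L,b_L}$. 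Either route reduces the proposition to two elementary facts: suffix-closure of $S_{a,b}$ and the pigeonhole that forces one pair to carry infinitely much of the complexity.

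The main obstacle I anticipate is purely bookkeeping: making precise and keeping straight the two levels of "infinitely often" — the occurrences of $af$ and $bf$ must be genuinely infinite (so that passing to a suffix preserves the property and so that the limiting sequence still has both extensions \emph{occurring infinitely many times}, not merely occurring), while simultaneously the \emph{finiteness} of the alphabet is used twice (once to guarantee a recurrent pair exists above each long left special factor, once to pin down a single pair along the branch). None of the steps is deep, but the proposition is exactly the kind of "folklore" statement where a careless argument produces a sequence whose extensions occur only finitely often, or whose witnessing letters drift; the care is in the order of quantifiers, which is why I would structure the proof around the explicitly defined sets $S_{a,b}$ and prove their suffix-closure as a named lemma before extracting the branch.
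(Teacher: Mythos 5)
Your overall architecture (pigeonhole on the finitely many unordered letter pairs, closure of the relevant set of factors under truncation, and a K\H{o}nig/compactness extraction of an infinite branch) is exactly the paper's, and that part of the argument is sound. But the step you dispose of in a parenthesis is the actual crux, and your justification for it does not work. You claim that ``a long-enough left special factor must, by finiteness of the alphabet, have two of its left extensions recurrent.'' As a universal statement this is false: in the aperiodic, non-recurrent sequence $0\,1\,00\,1\,000\,1\,0000\,1\cdots$, every factor of the form $0^k10^j$ with $k\geq 1$ is left special, yet its left extension $10^k10^j$ occurs exactly once, so it has only one recurrent left extension --- and such factors exist at every length. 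What you need (and what Morse--Hedlund applied to $\uu$ itself does \emph{not} give you) is the existential statement: for every $n$ there is \emph{some} left special factor of length $n$ both of whose witnessing left extensions occur infinitely often. The paper obtains this by first passing to a suffix $\mathbf{s}=u_{k_n}u_{k_n+1}\cdots$ in which every factor of length $n+1$ is recurrent, and only then invoking aperiodicity of $\mathbf{s}$ (via its strongly connected Rauzy graph, equivalently Morse--Hedlund for $\mathbf{s}$) to produce a vertex of in-degree $\geq 2$; all edges of that graph are recurrent factors by construction. Your sets $S_{a,b}$ are the right objects, but without this pass-to-a-recurrent-tail step you have not shown they are nonempty at every level, and ``finiteness of the alphabet'' alone only yields that a recurrent factor has \emph{at least one} recurrent left extension, not two.

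A second, mechanical problem: you have prefixes and suffixes reversed throughout Part~1. If $a f$ and $b f$ occur infinitely often and $f'$ is the length-$(|f|-1)$ \emph{suffix} of $f$, nothing follows about $af'$ and $bf'$ (occurrences of $af$ witness $f_0f'$, not $af'$); the set of bi-recurrent-left-special factors is closed under taking \emph{prefixes}, since $af'$ is a prefix of $af$ when $f'$ is a prefix of $f$. Accordingly the tree's parent relation must be ``delete the last letter,'' whose infinite branches are precisely right-sided sequences all of whose \emph{prefixes} lie in $S_{a_L,b_L}$ --- which is also what the statement of the proposition requires. With your suffix-based parent relation the branch you extract is a left-sided sequence, i.e.\ the object needed for Part~2, not Part~1. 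This is a consistent transposition rather than a conceptual error, but as written the closure lemma you propose to prove is false.
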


\begin{theorem}\label{thm:Conjectur27}\label{thm:Conjecture25}  A~sequence $\uu$ is eventually periodic if and only if there exists $n \in \N$ such that $r_{\uu}(n+2) = r_{\uu}(n)$. 
\end{theorem}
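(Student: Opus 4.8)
The plan is to handle the two implications separately. The ``only if'' part is immediate: if $\uu$ is eventually periodic, then by Theorem~\ref{thm:ProPeriodic} the sequences $(r_\uu(2n))_{n\in\N}$ and $(r_\uu(2n+1))_{n\in\N}$ are both eventually constant, so $r_\uu(n+2)=r_\uu(n)$ for every sufficiently large $n$, and in particular for some $n\in\N$. The rest of the work is the ``if'' part, which I would prove in its contrapositive form: \emph{if $\uu$ is aperiodic, then $r_\uu(n+2)>r_\uu(n)$ for every $n\in\N$}.

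For $n=0$ this is direct: aperiodicity forces $\mathcal C_\uu(1)\ge 2$, hence $\mathcal C_\uu(2)\ge 3$ by Theorem~\ref{thm:MorseHedlund}, and since every class of $\sim_r$ has at most two elements we get $r_\uu(2)\ge 2>1=r_\uu(0)$. So fix $n\ge 1$ and suppose for contradiction that $r_\uu(n+2)=r_\uu(n)$, i.e., $\sum_{t\in\langle\uu\rangle_n}(\mathcal T(t)-1)=0$ by \eqref{eq:vypocetr(n+2)}. By Item~1 of Lemma~\ref{lem:simpleObservations}, at most one class $t_{\mathrm{exc}}\in\langle\uu\rangle_n$ has $\mathcal T(t_{\mathrm{exc}})=0$ (the class of the length-$n$ prefix $p$ of $\uu$, and only if neither $p$ nor $\overline p$ occurs at a positive position), while all remaining classes have $\mathcal T\ge 1$. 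Hence the vanishing sum leaves exactly two possibilities: (a) no class has $\mathcal T=0$, and $\mathcal T(t)=1$ for all $t\in\langle\uu\rangle_n$; or (b) $t_{\mathrm{exc}}$ exists, exactly one further class $t^\ast$ has $\mathcal T(t^\ast)=2$, and $\mathcal T(t)=1$ for all the other classes.

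Next I would invoke Proposition~\ref{pro:RSaLS}. Let $f$ be the length-$n$ prefix of $\mathbf{x}_L$ and $g$ the length-$n$ suffix of $\mathbf{y}_R$. Since $fx_n$, the length-$(n+1)$ prefix of $\mathbf{x}_L$, is left special, Item~3 of Lemma~\ref{lem:simpleObservations} yields $\mathcal T([f])\ge \#\mathrm{Lext}(f)\ge 2$; symmetrically, using $y_ng$ and Item~2, $\mathcal T([g])\ge \#\mathrm{Rext}(g)\ge 2$. Case (a) is thus impossible, and in case (b) neither $[f]$ nor $[g]$ can be $t_{\mathrm{exc}}$ or a $\mathcal T=1$ class, so necessarily $[f]=[g]=t^\ast$ with $\mathcal T(t^\ast)=2$ and, importantly, $t_{\mathrm{exc}}$ exists, i.e., $p$ and $\overline p$ do not recur in $\uu$. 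It remains to rule out case (b). Set $w:=f$, so $t^\ast=[w]$ and $g\in\{w,\overline w\}$. From $\mathcal T(t^\ast)=2$ and Items~2 and~3 one gets $\#\mathrm{Lext}(w)=\#\mathrm{Rext}(g)=2$, and then a short case analysis (distinguishing $g=w$ from $g=\overline w$, and using that the words $a_Lfx_n$, $b_Lfx_n$, $y_nga_R$, $y_ngb_R$ all lie in $\mathrm{Bext}(w)\cup\mathrm{Bext}(\overline w)$ and all occur infinitely often in $\uu$ by Proposition~\ref{pro:RSaLS}) shows that \emph{every} word of $\mathrm{Bext}(w)\cup\mathrm{Bext}(\overline w)$ occurs infinitely often in $\uu$. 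The contradiction now follows quickly: since $p$ and $\overline p$ have no positive occurrence, $\mathrm{Ind}_\uu(n-1)>0$, hence $\mathrm{Ind}_\uu(n)>0$; let $v$ be the length-$n$ factor occurring at $i:=\mathrm{Ind}_\uu(n)$. By Lemma~\ref{lem:prvniOpakovany}, $\mathcal T([v])\ge 2$, so $[v]=t^\ast$ (the only class of $\langle\uu\rangle_n$ with $\mathcal T\ge 2$) and $v\in\{w,\overline w\}$. Then the length-$(n+1)$ factor at position $i-1$, namely $u_{i-1}v$, is a prefix of the both-sided extension $u_{i-1}\,v\,u_{i+n}\in\mathrm{Bext}(v)\subseteq\mathrm{Bext}(w)\cup\mathrm{Bext}(\overline w)$, which occurs infinitely often; hence $u_{i-1}v$ occurs infinitely often in $\uu$, contradicting the minimality of $i=\mathrm{Ind}_\uu(n)$ in Definition~\ref{def:Index}, by which no length-$(n+1)$ factor equivalent to $u_{i-1}v$ occurs at a position $\ge i$.

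The step I expect to be the main obstacle is the structural claim in the previous paragraph: deducing from $\mathcal T(t^\ast)=2$ together with Proposition~\ref{pro:RSaLS} that all both-sided extensions of a representative of $t^\ast$ occur infinitely often (this is where the rigidity of the ``minimal'' situation has to be unfolded and matched against the two left/right-special sequences). Everything else reduces to bookkeeping with \eqref{eq:vypocetr(n+2)} and Lemmas~\ref{lem:simpleObservations} and \ref{lem:prvniOpakovany}.
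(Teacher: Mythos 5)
Your proof is correct and follows essentially the same route as the paper's: Proposition~\ref{pro:RSaLS} supplies a recurrent special factor whose class has $\mathcal{T}\geq 2$, Item~1 of Lemma~\ref{lem:simpleObservations} controls the single possible $\mathcal{T}=0$ class, and ${\rm Ind}_\uu(n)$ delivers the final contradiction via Lemma~\ref{lem:prvniOpakovany}. The only real difference is the endgame: where the paper directly exhibits three pairwise non-equivalent words in ${\rm Bext}(w)\cup{\rm Bext}(\overline{w})$ (the non-recurrent $xvd$ against the recurrent $a_Lwc$, $b_Lwc$), you bring in the right-special sequence ${\bf y}_R$ to show that every both-sided extension of $t^\ast$ recurs and then contradict the minimality of ${\rm Ind}_\uu(n)$ --- a correct but slightly longer detour (the case analysis you flag does go through) that the paper's argument avoids.
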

\begin{proof} 
The implication~($\Rightarrow$) was proven in~\cite{AlCaLiShSt2025}, see Theorem \ref{thm:ProPeriodic}. 

Let us prove the opposite implication ($\Leftarrow$). 

If $r_{\uu}(2) = r_{\uu}(0)=1$, then the unique class $t$ in $\langle\uu\rangle_2$   is either $t=\{aa\}$ or $t=\{ab,ba\}$ for some $a,b \in \mathcal{A}$. Thus  $\uu=a^{\omega}$ or $\uu=(ab)^{\omega}$, respectively. 

\medskip

\noindent If $r_{\uu}(n+2) = r_{\uu}(n)$ for some $n \in \N, n\geq 1$, let us proceed by contradiction, i.e., we assume that $\uu$ is aperiodic.      

Denote by $wc$ the prefix of ${\bf x}_L$ of length $n+1$, where  the  sequence ${\bf x}_L$ is as in   Proposition \ref{pro:RSaLS}  and $c \in \mathcal{A}$.  
Item 3 of Lemma  \ref{lem:simpleObservations} says that $\mathcal{T}(t_L)\geq 2$, where $t_L$ is the class containing $w$. 

\medskip

Let $i = {\rm Ind}_\uu(n)$.
\begin{description}
\item[Case $i =0$.]   \quad Clearly, ${\rm Ind}_\uu(n-1)=0$. Then, Item 1 of Lemma  \ref{lem:simpleObservations} gives  $\mathcal{T}(t)\geq 1$ for every $t \in \langle\uu\rangle_n$. As $\mathcal{T}(t_L)\geq 2$, 
Equation  \eqref{eq:vypocetr(n+2)} implies $r_{\uu}(n+2) \geq  r_{\uu}(n)+1$ --  a contradiction. 

\item[Case $i >0$.]  \quad  Let $v$ be the factor of length $n$ occurring at the position $i$.  
 By Lemma \ref{lem:prvniOpakovany},  $\mathcal{T}(t_{\rm Ind})\geq 2$ for the equivalence class $t_{\rm Ind}$ containing~$v$.
 
 If $t_{\rm Ind}\neq t_L$,  then $r_{\uu}(n+2) \geq  r_{\uu}(n)+1$,  since  all equivalence classes $t$ except one have $\mathcal{T}(t)\geq 1$  and at least two classes  have  $\mathcal{T}(t)\geq 2$ --  a~contradiction.  

 If $t_{\rm Ind}=t_L$,  then $v=w$ or $v= \overline{w}$.  Denoting $x = u_{i-1}$ and $d$ as in the proof of Lemma~\ref{lem:prvniOpakovany},  we see that $\{xvd, a_Lwc, b_Lwc\}\subset {\rm Bext}(w) \cup {\rm Bext}(\overline{w})$.

 By the definition of $i$,  the factors   $xv$  and $\overline{v}x$ have no occurrence $\geq i$,   whereas $a_Lwc$ and  $b_Lwc$ have infinitely many occurrences in $\uu$. Hence, $xvd$ cannot be equivalent to $a_Lwc$ or  to $b_Lwc$.   Moreover, $a_Lwc$ is not equivalent to  $b_Lwc$  as  $a_L \neq b_L$.    Altogether,  the factors $xvd, a_Lwc, b_Lwc$ belong to three different equivalence classes, i.e., $\mathcal{T}(t_L)\geq 3$. It yields a~contradiction,  too.

\end{description}

\end{proof}

\section{Aperiodic sequences with the minimum reflection complexity}\label{sec:minimal}

 Aperiodic sequences with the minimum factor complexity are called Sturmian sequences. We will show in the sequel that they also play an important role in the description of aperiodic sequences $\uu$ with the minimum reflection complexity, i.e., where $r_\uu(n+2)=r_\uu(n)+1$.

\begin{definition} A sequence $\uu$ is 
called 
\begin{itemize} \item Sturmian if ${\mathcal C}_\uu(n)= n+1$ for every $n \in \N$.

\item  quasi-Sturmian if there exist  $n_0 \in \N$   and a constant $c \in \N $ such that  ${\mathcal C}_\uu(n)= n+c$ for every $n \in \N, n\geq n_0$.    

\end{itemize}
\end{definition}

It is known that every Sturmian sequence $\uu$ has the language closed under reversal. 
Let us recall some equivalent characterizations of Sturmian sequences. 

\begin{theorem}[\cite{DrPi1999}]\label{thm:SturmianEquivalence} Let $\uu$ be a sequence. The following statements are equivalent. 
\begin{enumerate}
\item $\uu$ is Sturmian;
\item $\uu$ is binary and contains exactly one right special factor of every length;
\item ${\mathcal P}_\uu(n)+{\mathcal P}_\uu(n+1)=3$ for every $n\in \mathbb N$.
\end{enumerate}
\end{theorem}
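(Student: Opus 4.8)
The final statement to prove is Theorem~\ref{thm:SturmianEquivalence}, which asserts the equivalence of three characterizations of Sturmian sequences. Let me sketch a proof plan.

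\medskip

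The plan is to prove the chain of implications $(1)\Rightarrow(2)\Rightarrow(1)$ and $(1)\Leftrightarrow(3)$, relying on the first-difference formula for factor complexity recalled in Section~\ref{sec:Preliminaries}, namely ${\mathcal C}_\uu(n+1)-{\mathcal C}_\uu(n)=\sum_{w\in\mathcal{L}_n(\uu)}(\#{\rm Rext}(w)-1)$. First I would establish $(1)\Rightarrow(2)$: if ${\mathcal C}_\uu(n)=n+1$ for all $n$, then ${\mathcal C}_\uu(1)=2$ forces the alphabet effectively to be binary, and ${\mathcal C}_\uu(n+1)-{\mathcal C}_\uu(n)=1$ for every $n$ forces, via the right-extension formula (each term $\#{\rm Rext}(w)-1\ge 0$ since every factor extends), that exactly one factor of each length $n$ has two right extensions and all others have exactly one; that unique factor is the unique right special factor of length $n$. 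Conversely, for $(2)\Rightarrow(1)$: binary plus exactly one right special factor of each length $n$ gives $\#{\rm Rext}(w)=2$ for that one factor and $\#{\rm Rext}(w)=1$ for the rest (using binarity so a right special factor has exactly two extensions, and every factor has at least one), whence ${\mathcal C}_\uu(n+1)-{\mathcal C}_\uu(n)=1$; combined with ${\mathcal C}_\uu(1)=2$ (binary and genuinely using both letters, which follows since otherwise there would be no right special factor of length $0$), telescoping yields ${\mathcal C}_\uu(n)=n+1$.

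\medskip

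For the equivalence with $(3)$, I would use the known relation between palindromic complexity and factor complexity for sequences closed under reversal: if $\mathcal{L}(\uu)$ is closed under reversal, then ${\mathcal C}_\uu(n+1)-{\mathcal C}_\uu(n)+2 = {\mathcal P}_\uu(n)+{\mathcal P}_\uu(n+1)$ (this is the identity from Allauzen / Baláži–Masáková–Pelantová type results). Granting that identity together with the fact, stated in the excerpt, that every Sturmian sequence has language closed under reversal, the implication $(1)\Rightarrow(3)$ is immediate: ${\mathcal C}_\uu(n+1)-{\mathcal C}_\uu(n)=1$ gives ${\mathcal P}_\uu(n)+{\mathcal P}_\uu(n+1)=3$. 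For $(3)\Rightarrow(1)$, one must first argue that $(3)$ forces the alphabet to be binary and the language closed under reversal (from ${\mathcal P}_\uu(0)+{\mathcal P}_\uu(1)=3$ one gets ${\mathcal P}_\uu(1)=2$, i.e. exactly two letters, and the palindromic-complexity bound ${\mathcal P}_\uu(n)\le$ something forces closure under reversal on a binary alphabet since a non-palindromic factor and its reversal are distinct), and then apply the same identity backwards to recover ${\mathcal C}_\uu(n+1)-{\mathcal C}_\uu(n)=1$, hence ${\mathcal C}_\uu(n)=n+1$ after checking the base case.

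\medskip

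The main obstacle I anticipate is the implication $(3)\Rightarrow(1)$, specifically the step of deducing that the language is closed under reversal (and that the alphabet is binary) purely from the hypothesis ${\mathcal P}_\uu(n)+{\mathcal P}_\uu(n+1)=3$ for all $n$, because the clean palindromic–factor complexity identity I want to invoke is itself only valid under closure under reversal — so there is a potential circularity to navigate. The way around it is to first prove closure under reversal directly: ${\mathcal P}_\uu(1)=2$ pins the alphabet to $\{a,b\}$; then a combinatorial argument on a binary alphabet (each length has at most two palindromes up to the parity constraint, and the existence of palindromes of each length forces the bispecial structure to be symmetric) shows the language is closed under reversal, after which the identity applies. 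Since this theorem is quoted from~\cite{DrPi1999}, however, in the write-up I would most likely just cite it rather than reproduce the full argument; the sketch above is how one would reconstruct it if needed.
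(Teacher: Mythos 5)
The paper does not prove this theorem: it is quoted directly from Droubay and Pirillo~\cite{DrPi1999}, so your decision to cite it rather than reprove it matches what the authors do, and the equivalence $(1)\Leftrightarrow(2)$ in your sketch (via the first-difference formula for ${\mathcal C}_\uu$ and the right-extension count) is correct and standard.

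The part of your sketch dealing with item $(3)$ has a genuine gap, however. The relation you invoke, ${\mathcal P}_\uu(n)+{\mathcal P}_\uu(n+1)={\mathcal C}_\uu(n+1)-{\mathcal C}_\uu(n)+2$, is not an identity: for uniformly recurrent sequences with language closed under reversal one only has the inequality ${\mathcal P}_\uu(n)+{\mathcal P}_\uu(n+1)\leq {\mathcal C}_\uu(n+1)-{\mathcal C}_\uu(n)+2$ --- this is exactly how the present paper uses it in Section~\ref{sec:comments}, citing \cite{BaMaPe2007} --- and equality for all $n$ characterizes the so-called rich sequences. Consequently, for $(1)\Rightarrow(3)$ the inequality only yields ${\mathcal P}_\uu(n)+{\mathcal P}_\uu(n+1)\leq 3$; to get equality you would additionally need the (true but nontrivial) fact that Sturmian sequences are rich, or else count palindromes directly from the bispecial structure as Droubay and Pirillo do. For $(3)\Rightarrow(1)$ the problem is worse: even granting closure under reversal, reading the inequality ``backwards'' only gives ${\mathcal C}_\uu(n+1)-{\mathcal C}_\uu(n)\geq 1$, i.e.\ aperiodicity, and provides no upper bound on the complexity, so it cannot deliver ${\mathcal C}_\uu(n)=n+1$; moreover the step ``a combinatorial argument on a binary alphabet shows the language is closed under reversal'' is asserted rather than carried out. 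These are precisely the points where the proof in \cite{DrPi1999} does its real work, so a self-contained argument would have to supply them.
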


 Let us also recall an equivalent characterization of quasi-Sturmian sequences.

\begin{theorem}[\cite{Cassaigne1997}]\label{thm:cassaigne} A sequence $\uu$ over an alphabet $\mathcal{A}$ is quasi-Sturmian if and only if $\uu$ can be written as $\uu = p \varphi({\bf v})$, where $p \in \mathcal{A}^*$,  $\vv$ is a Sturmian sequence over $\{a,b\}$ and  $\varphi:  \{a,b\}^* \mapsto \mathcal{A}^*$  is a non-periodic morphism (non-periodic means that $\varphi(ab)\neq \varphi(ba)$). 
\end{theorem}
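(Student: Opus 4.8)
\emph{Overall approach.} The statement is Cassaigne's structural characterization of quasi-Sturmian sequences, and I would prove the two implications separately, using the first-difference identity $\mathcal{C}_\uu(n+1) - \mathcal{C}_\uu(n) = \sum_{w \in \mathcal{L}_n(\uu)}\bigl(\#\mathrm{Rext}(w) - 1\bigr)$ as the main bookkeeping tool throughout. For ($\Leftarrow$) the target is to show that $\mathcal{C}_{p\varphi(\vv)}(n+1) - \mathcal{C}_{p\varphi(\vv)}(n) = 1$ for all large $n$; for ($\Rightarrow$) the target is to reconstruct $\varphi$ and $\vv$ from the eventual ``single branching point'' structure of $\uu$. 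A synchronization lemma for non-erasing morphisms applied to a Sturmian word is the common technical core of both directions, so I would isolate it first: since $\varphi(ab) \neq \varphi(ba)$, the words $\varphi(a),\varphi(b)$ are nonempty, distinct, and not powers of a common word (Lyndon--Sch\"utzenberger), so $\varphi$ is non-erasing and injective on $\{a,b\}$, and applied to an aperiodic uniformly recurrent sequence such a morphism has bounded synchronization delay: every sufficiently long factor $w$ of $\varphi(\vv)$ has all its occurrences aligned consistently with the $\varphi$-block decomposition, so $w$ ``remembers'' the underlying factor of $\vv$ together with a bounded offset.

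\emph{The implication ($\Leftarrow$).} Prepending the finite prefix $p$ perturbs the factor complexity by only a bounded amount and does not change its eventual first difference (the extra factors of each length are controlled by the prefixes of $p\varphi(\vv)$ that are not factors of $\varphi(\vv)$), so it suffices to treat $\psi := \varphi(\vv)$. By the synchronization lemma, the right special factors of $\psi$ of large length are in bijection with the right special factors of $\vv$ of the linearly related length; since $\vv$ is Sturmian it has, by \Cref{thm:SturmianEquivalence}, exactly one right special factor of every length and it has exactly two right extensions. Hence for all $n$ beyond a threshold $n_0$, $\psi$ has exactly one right special factor of length $n$ with exactly two right extensions, so $\mathcal{C}_\psi(n+1) - \mathcal{C}_\psi(n) = 1$ for $n \ge n_0$, i.e. $\psi$ (and therefore $p\psi$) is quasi-Sturmian.

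\emph{The implication ($\Rightarrow$).} Assume $\mathcal{C}_\uu(n) = n + c$ for $n \ge n_0$, so the first difference is $1$ there; by the identity above, for each such $n$ there is exactly one right special factor $R_n$ with $\#\mathrm{Rext}(R_n) = 2$ and exactly one left special factor $L_n$ with $\#\mathrm{Lext}(L_n) = 2$. First I would pass to a recurrent tail $\uu^{(k)} = u_k u_{k+1}\cdots$ (beyond the last occurrence of any factor that occurs only finitely often), which keeps the same eventual first difference and lets the discarded prefix $u_0\cdots u_{k-1}$ be absorbed into the $p$ of the statement. For the recurrent sequence, look at its Rauzy graph $\Gamma_n$ with $n \ge n_0$: it is strongly connected, has $\mathcal{C}_\uu(n)$ vertices and $\mathcal{C}_\uu(n)+1$ edges, with all out-degrees $1$ except a single surplus at $R_n$ and all in-degrees $1$ except a single surplus at $L_n$; such a graph is a ``figure-eight'' (when $R_n = L_n$ is bispecial) or a ``theta graph'' (two vertices joined by three internally disjoint oriented paths). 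In either case, starting from a fixed occurrence of the special vertex, $\uu$ decomposes as a concatenation of the two elementary cycles $U$ and $V$ of $\Gamma_n$; coding by $a$ resp.\ $b$ which cycle is traversed at each return gives a binary sequence $\vv$ with $\uu = \varphi(\vv)$ up to a short word absorbed into $p$, where $\varphi(a) = U$ and $\varphi(b) = V$. Finally I would check that $\vv$ is Sturmian: it is binary, it is aperiodic because $\uu$ is, and running the synchronization correspondence in reverse, the unique special vertex of $\Gamma_n$ translates into $\vv$ having exactly one right special factor of each length, so $\vv$ is Sturmian by \Cref{thm:SturmianEquivalence}; moreover $\varphi(ab) = \varphi(ba)$ would force $\uu$ eventually periodic, contradicting aperiodicity, so $\varphi$ is non-periodic as claimed.

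\emph{Main obstacle.} The delicate work is all in ($\Rightarrow$). The first sensitive point is making the reduction to a recurrent sequence rigorous (a quasi-Sturmian sequence genuinely has a recurrent tail with the same eventual first difference, because the non-recurrent part contributes only boundedly many factors of each length past some point). The second, and I expect the hardest, is controlling how the shape of $\Gamma_n$ evolves with $n$: a priori it could oscillate between the figure-eight and the theta shape, so one must show the relevant structure stabilizes — or argue along a well-chosen length — and then verify that the induced morphism $\varphi$ and coding $\vv$ are independent of that choice and that the factor-to-factor correspondence between $\uu$ and $\vv$ is tight enough to transfer ``exactly one right special factor per length.'' Proving the synchronization/bounded-delay lemma once, cleanly, for non-erasing injective morphisms on a Sturmian word is what makes both of these manageable.
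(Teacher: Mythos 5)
The paper does not prove this statement: it is quoted verbatim from Cassaigne's paper \cite{Cassaigne1997} and used as a black box, so there is no in-paper argument to compare yours against. Judged on its own, your outline follows what is essentially the standard route to Cassaigne's theorem (a synchronization lemma for the direction ($\Leftarrow$), a return-word/Rauzy-graph decomposition for the direction ($\Rightarrow$)), and the overall architecture is sound. Two remarks on the ($\Rightarrow$) direction: the ``stabilization'' worry you flag as the hardest point is largely a non-issue, because one fixes a \emph{single} sufficiently large $n$, decomposes the recurrent tail into the two return words of the unique right special factor $R_n$, and never needs the shape of $\Gamma_m$ for other $m$; and the Sturmianity of the derived sequence $\vv$ does not require ``running synchronization in reverse'' --- if $v\neq v'$ were two right special factors of $\vv$ of the same length, their images anchored at $R_n$ would be right special factors of $\uu$, and since a suffix of a right special factor is right special and $\uu$ has only one right special factor per length, one image would be a suffix of the other, forcing $v=v'$ because occurrences of $R_n$ mark exactly the return-word boundaries.

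The genuine gap is the synchronization lemma itself, which carries the whole weight of ($\Leftarrow$) and part of ($\Rightarrow$). From $\varphi(ab)\neq\varphi(ba)$ and Lyndon--Sch\"utzenberger you only get that $\varphi(a),\varphi(b)$ do not commute, hence form a code; this is very far from ``every sufficiently long factor of $\varphi(\vv)$ has all its occurrences aligned with the block decomposition.'' Bounded synchronization delay (recognizability) for an arbitrary non-erasing morphism on an aperiodic minimal subshift is a substantial theorem in its own right, not a consequence of injectivity, and even granting it, the claimed ``bijection'' between right special factors of $\varphi(\vv)$ of length $n$ and right special factors of $\vv$ ``of the linearly related length'' is not literally correct: a right special factor of the image can end strictly inside a block, at an offset where $\varphi(a)$ and $\varphi(b)$ share a prefix and then diverge, and the count $\mathcal{C}_{\varphi(\vv)}(n+1)-\mathcal{C}_{\varphi(\vv)}(n)=1$ only emerges after a careful bookkeeping of (underlying special factor, offset) pairs together with these divergence positions. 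That bookkeeping is the actual content of Cassaigne's proof, so as written your argument assumes the hard part rather than proving it.
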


\begin{theorem}[\cite{AlCaLiShSt2025},  Theorem 41]\label{thm:Theorem35Unich} Let $\uu$ be an aperiodic sequence.
\begin{enumerate}
    \item  For all $n \in \N$, we have $r_\uu(n)\geq 1+\lfloor  \frac{n+1}{2}\rfloor$.
    \item The sequence  $\uu$ is Sturmian if and only if  $r_\uu(n)= 1+\lfloor  \frac{n+1}{2}\rfloor$  for all $n \in \N$.
\end{enumerate}
\end{theorem}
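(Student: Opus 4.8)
## Proof Plan for Theorem~\ref{thm:Theorem35Unich}

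The plan is to prove both parts together by carefully analyzing how the two ``defect'' contributions to $r_\uu(n+2)-r_\uu(n)$ interact — namely the contribution from the left-special branching guaranteed by Proposition~\ref{pro:RSaLS} and the contribution from the first repeated factor tracked by $\mathrm{Ind}_\uu$. For the lower bound in Item~1, I would proceed by induction on $n$. The base cases $n=0,1$ are immediate since an aperiodic sequence has at least two letters and at least two factors of length $2$. For the inductive step, fix $n\geq 1$ and let $i=\mathrm{Ind}_\uu(n)$. If $i=0$, then every class $t\in\langle\uu\rangle_n$ has $\mathcal{T}(t)\geq 1$ by Item~1 of Lemma~\ref{lem:simpleObservations}, and the left-special prefix $w$ of $\mathbf{x}_L$ gives one class $t_L$ with $\mathcal{T}(t_L)\geq 2$, so by~\eqref{eq:vypocetr(n+2)} we get $r_\uu(n+2)\geq r_\uu(n)+1$. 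If $i>0$, Lemma~\ref{lem:prvniOpakovany} supplies a class $t_{\mathrm{Ind}}$ with $\mathcal{T}(t_{\mathrm{Ind}})\geq 2$; if $t_{\mathrm{Ind}}\neq t_L$ we again gain $+1$ from two distinct classes, while if $t_{\mathrm{Ind}}=t_L$ the argument at the end of the proof of Theorem~\ref{thm:Conjectur27} shows $\mathcal{T}(t_L)\geq 3$, again yielding $+1$. In every case $r_\uu(n+2)\geq r_\uu(n)+1$, so together with the base cases a telescoping/induction argument gives $r_\uu(n)\geq 1+\lfloor\frac{n+1}{2}\rfloor$.

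For Item~2, one direction is the easier: if $\uu$ is Sturmian, then $\mathcal{C}_\uu(n)=n+1$ and the language is closed under reversal, and I would compute $r_\uu(n)$ directly from the relation $r_\uu(n)=\frac{1}{2}\bigl(\mathcal{C}_\uu(n)+\mathcal{P}_\uu(n)\bigr)$ (every non-palindromic factor pairs with its distinct reversal, every palindrome is alone). Using Item~3 of Theorem~\ref{thm:SturmianEquivalence}, $\mathcal{P}_\uu(n)+\mathcal{P}_\uu(n+1)=3$, so $\mathcal{P}_\uu(n)$ alternates between $1$ and $2$; substituting gives exactly $r_\uu(n)=1+\lfloor\frac{n+1}{2}\rfloor$ after checking the two parities of $n$. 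The converse is the substantive direction: assume $r_\uu(n)=1+\lfloor\frac{n+1}{2}\rfloor$ for all $n$, so equality holds throughout the induction in Item~1. Equality forces, for each $n$: (a) exactly the classes $t_L$ and (when $i>0$) $t_{\mathrm{Ind}}$ can have $\mathcal{T}(t)\geq 2$, and in fact when they coincide the bound must be $\mathcal{T}(t_L)=2$, ruling out $t_{\mathrm{Ind}}=t_L$ with $\mathcal{T}\geq 3$; and (b) all other classes have $\mathcal{T}(t)=1$. From (b) and Items~2--3 of Lemma~\ref{lem:simpleObservations} I would deduce that $\uu$ has exactly one right-special and one left-special factor of each length, with branching number exactly $2$ — the standard Sturmian structure. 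The delicate point is handling the alphabet size: I would argue that if $|\mathcal{A}|\geq 3$ then some factor has $\#\mathrm{Rext}(w)\geq 3$ or one finds a second independent special class, contradicting equality; hence $\uu$ is binary, and by Item~2 of Theorem~\ref{thm:SturmianEquivalence} it is Sturmian.

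The main obstacle I anticipate is the bookkeeping in the converse direction: squeezing the equality $r_\uu(n+2)=r_\uu(n)+1$ down to the precise statement ``exactly one right-special factor of each length, over a binary alphabet.'' The subtlety is that $\mathcal{T}(t)$ conflates both-sided extensions of $w$ and of $\overline{w}$, so a class can be ``special'' in the reflection sense (large $\mathcal{T}$) without $w$ itself being left- or right-special in the usual sense — for instance when $w$ and $\overline{w}$ are both in the language and their extensions interleave. I would need a lemma isolating exactly when $\mathcal{T}(t)=1$: essentially $\mathcal{T}(t)=1$ forces $\#\mathrm{Bext}(w)\leq 1$ and $\#\mathrm{Bext}(\overline{w})\leq 1$ with the two (if both present) reversals of each other, which in turn (summing over all $t$ and using the analogous right-extension formula $\mathcal{C}_\uu(n+1)-\mathcal{C}_\uu(n)=\sum_w(\#\mathrm{Rext}(w)-1)$) pins down $\mathcal{C}_\uu(n+1)-\mathcal{C}_\uu(n)=1$ for all large $n$, hence — combined with aperiodicity and reversal-closedness forced by the structure — Sturmian. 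Managing the edge cases at small $n$ and the prefix class where $\mathcal{T}(t)=0$ is allowed is where the routine-looking calculations actually require care.
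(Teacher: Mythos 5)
This statement is imported verbatim from \cite{AlCaLiShSt2025} (Theorem~41 there), so the paper contains no proof of it to compare against; I can only judge your argument on its own merits. Your Item~1 and the forward direction of Item~2 are sound: the inequality $r_\uu(n+2)\geq r_\uu(n)+1$ for aperiodic $\uu$ is exactly what the proof of Theorem~\ref{thm:Conjectur27} establishes, the base cases $r_\uu(0)=1$ and $r_\uu(1)\geq 2$ are immediate, and the telescoping is correct; likewise the computation of $r_\uu$ for Sturmian sequences from $r_\uu(n)=\frac12\left(\mathcal{C}_\uu(n)+\mathcal{P}_\uu(n)\right)$ together with $\mathcal{P}_\uu(n)+\mathcal{P}_\uu(n+1)=3$ is the same device the paper uses in Lemma~\ref{lem:ternary} and checks out for both parities.

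The genuine gap is in the converse of Item~2, at the step where you force $\uu$ to be binary. You propose to show that $\#\mathcal{A}\geq 3$ produces a factor with three right extensions or a second independent special class, ``contradicting equality.'' That principle is false, and the paper's own Lemma~\ref{lem:ternary} is the counterexample: the ternary sequences $\pi(\vv)$ satisfy $r_\uu(n+2)=r_\uu(n)+1$ for every $n$, have exactly one right special factor of each length with exactly two right extensions, and exhibit no extra branching anywhere --- yet they are not binary. The tightness of the increments alone cannot separate Sturmian sequences from these ternary images; that is precisely the content of Theorem~\ref{thm:r+1}. What saves your argument is the absolute value of the hypothesis at $n=1$: every letter is a palindrome, so $r_\uu(1)$ equals the number of letters occurring in $\uu$, and $r_\uu(1)=1+\lfloor 2/2\rfloor=2$ forces $\uu$ to be binary outright; then ``exactly one right special factor of every length'' plus Item~2 of Theorem~\ref{thm:SturmianEquivalence} finishes the proof. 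A smaller inaccuracy in the same part: you claim the case $t_{\mathrm{Ind}}=t_L$ with $\mathcal{T}(t_L)\geq 3$ is ruled out by equality; it is not --- it is compatible with $r_\uu(n+2)=r_\uu(n)+1$ when the prefix class contributes $\mathcal{T}=0$, and the paper's Lemma~\ref{lem:RSdelkyN+1} handles exactly this configuration rather than excluding it.
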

Clearly,  $r_\uu(n)= 1+\lfloor  \frac{n+1}{2}\rfloor$  for all $n \in \N$ implies that $r_\uu(n+2)=r_\uu(n) +1$ for all $n \in \N$. 
But the reverse implication is not true as illustrated in the following lemma. 

\begin{lemma}\label{lem:ternary}
Let $\vv$ be a Sturmian sequence over $\{a,b\}$. Consider $\uu = \pi(\vv)$, where $\pi$ is a morphism given by 
$\pi: a \mapsto ac, \ b\mapsto bc$ with $c \notin \{a,b\}$. Then $r_\uu(n+2) = r_\uu(n)+1$ for every $n \in \N$.
\end{lemma}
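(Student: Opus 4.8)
The plan is to exploit the rigid two-letter-periodic structure of $\uu=\pi(\vv)$. Writing $\vv = v_0v_1v_2\cdots$, we have $\uu = v_0\,c\,v_1\,c\,v_2\,c\cdots$, so $u_j=c$ exactly when $j$ is odd and $u_{2i}=v_i$. It follows that a factor of $\uu$ of even length $2m$ (with $m\ge 1$) either begins with a letter of $\{a,b\}$ and ends with $c$ (call it \emph{even-aligned}) or begins with $c$ and ends with a letter of $\{a,b\}$ (\emph{odd-aligned}), while a factor of odd length $2m+1$ either begins and ends with a letter of $\{a,b\}$ (\emph{even-aligned}) or begins and ends with $c$ (\emph{odd-aligned}); in each case the two types are disjoint and together exhaust $\mathcal{L}_\ell(\uu)$. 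The proof is then bookkeeping on these types, using that $\vv$ is closed under reversal, recurrent (Sturmian sequences are uniformly recurrent), satisfies $\mathcal{C}_\vv(m)=m+1$, and satisfies $r_\vv(m)=1+\lfloor\tfrac{m+1}{2}\rfloor$ by Theorem~\ref{thm:Theorem35Unich}.

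For even length $2m$, $m\ge 1$: every even-aligned factor has the form $z_0cz_1c\cdots z_{m-1}c=\pi(z)$ for a unique $z=z_0\cdots z_{m-1}\in\mathcal{L}_m(\vv)$, and --- since by recurrence every factor of $\vv$ of length $m$ occurs at some position $\ge 1$ --- every odd-aligned factor has the form $cw_0cw_1\cdots cw_{m-1}$ for a unique $w=w_0\cdots w_{m-1}\in\mathcal{L}_m(\vv)$. A direct check gives $\overline{\pi(z)}=cz_{m-1}cz_{m-2}\cdots cz_0$, i.e.\ the odd-aligned factor determined by $\overline z$; as $\mathcal{L}(\vv)$ is closed under reversal this is indeed a factor of $\uu$. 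Hence each $\sim_r$-class of $\mathcal{L}_{2m}(\uu)$ is $\{\pi(z),\overline{\pi(z)}\}$ for a unique $z\in\mathcal{L}_m(\vv)$ (in particular $\uu$ has no even-length palindromes of length $\ge 2$), so $r_\uu(2m)=\#\mathcal{L}_m(\vv)=m+1$; also $r_\uu(0)=1$.

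For odd length $2m+1$, $m\ge 0$: the even-aligned factors are in bijection with $\mathcal{L}_{m+1}(\vv)$ (map $z=z_0\cdots z_m$ to $z_0cz_1c\cdots cz_m$) and the odd-aligned factors with $\mathcal{L}_m(\vv)$ (map $w=w_0\cdots w_{m-1}$ to $cw_0cw_1\cdots cw_{m-1}c$, again using recurrence); this time reversal preserves each type and corresponds, under these bijections, to reversal of the underlying factor of $\vv$ (e.g.\ $\overline{z_0cz_1\cdots cz_m}=z_mcz_{m-1}\cdots cz_0$). Therefore $r_\uu(2m+1)=r_\vv(m+1)+r_\vv(m)=2+\lfloor\tfrac{m+2}{2}\rfloor+\lfloor\tfrac{m+1}{2}\rfloor=m+3$, the last step because $\lfloor\tfrac{m+2}{2}\rfloor+\lfloor\tfrac{m+1}{2}\rfloor=m+1$.

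It remains to assemble $r_\uu(0)=1$, $r_\uu(2m)=m+1$ for $m\ge 1$, and $r_\uu(2m+1)=m+3$ for $m\ge 0$, and to verify $r_\uu(n+2)-r_\uu(n)=1$ in the four cases $n=0$, $n=1$, $n=2m$ with $m\ge 1$, and $n=2m+1$ with $m\ge 0$; each is an immediate arithmetic check. I do not expect a genuine obstacle here: the only points that need care are the exhaustiveness of the parametrizations of even- and odd-aligned factors by factors of $\vv$ --- this is precisely where recurrence of $\vv$ enters, for the odd-aligned ones --- and the fact that a factor of one alignment can never be $\sim_r$-equivalent to a factor of the other alignment in the odd-length case; both are immediate from where the letters $c$ sit.
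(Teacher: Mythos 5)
Your proof is correct. It rests on the same structural observation as the paper's proof --- factors of $\uu$ split by alignment into $\pi$-images of factors of $\vv$ and their $c$-shifted counterparts, giving $r_\uu(2m)=m+1$ and $r_\uu(2m+1)=m+3$ --- and differs only in the final bookkeeping: the paper invokes closure under reversal to write $r_\uu=\tfrac{1}{2}\left({\mathcal C}_\uu+{\mathcal P}_\uu\right)$ and then uses the Sturmian palindromic identity ${\mathcal P}_\vv(k)+{\mathcal P}_\vv(k+1)=3$, whereas you count the $\sim_r$-classes directly, reducing the odd-length case to $r_\vv(m)+r_\vv(m+1)$ via Theorem~\ref{thm:Theorem35Unich}.
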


\begin{proof}
Since $\vv$ is a Sturmian sequence and therefore has the language closed under reversal, ${\mathcal L}(\uu)$ is also closed under reversal and $r_\uu(n)=\frac{1}{2}\left({\mathcal C}_\uu(n)+{\mathcal P}_\uu(n)\right)$ by Item (b) of Theorem 9 in~\cite{AlCaLiShSt2025}. It thus suffices to determine the factor and palindromic complexity of $\uu$.
\begin{itemize}
 \item Consider $n=2k$, where $k\in \mathbb N, k\geq 1$. Each factor $u$ of length $n$ in $\uu$ is obtained from a factor $v$ of $\vv$ of length $k$, where $v=v_iv_{i+1}\cdots v_{i+k-1}$, as $u=cv_icv_{i+1}\cdots cv_{i+k-1}$ or $u=v_icv_{i+1}\cdots cv_{i+k-1}c$. Consequently, ${\mathcal C}_\uu(n)=2(k+1)$. Obviously, there is no palindrome of length $n$ in $\uu$.
 Altogether, $r_\uu(n)=\frac{1}{2}\left(2(k+1)+0\right)=k+1=\frac{n}{2}+1$. It holds for $n=0$, too.
 \item Consider $n=2k+1$, where $k\in \mathbb N, k\geq 1$. Each factor $u$ of length $n$ in $\uu$ is obtained from a factor $v$ of $\vv$ of length $k$, where $v=v_iv_{i+1}\cdots v_{i+k-1}$, as $u=cv_icv_{i+1}\cdots cv_{i+k-1}c$, or from a factor $v$ of $\vv$ of length $k+1$, where $v=v_iv_{i+1}\cdots v_{i+k}$, as $u=v_icv_{i+1}\cdots cv_{i+k}.$ Moreover, $u$ is a palindrome if and only if $v$ is a palindrome.  Consequently, ${\mathcal C}_\uu(n)=(k+1)+(k+2)=2k+3$.  
Using Item~3 of Theorem~\ref{thm:SturmianEquivalence}, ${\mathcal P}_\uu(n)={\mathcal P}_\vv(k)+{\mathcal P}_\vv(k+1)=3$. 
Therefore, $r_\uu(n)=\frac{1}{2}(2k+3+3)=k+3=\frac{n+1}{2}+2$. It holds for $n=1$, too.
\end{itemize}

 Putting the above results together, we obtain $r_\uu(n+2) = r_\uu(n)+1$ for every $n \in \N$.
\end{proof}

The following lemmata aim for a complete characterization of the sequences $\uu$ satisfying $r_\uu(n+2) = r_\uu(n)+1$ for every $n \in \N$. Such a characterization is provided in~Theorem~\ref{thm:r+1}.

\begin{lemma}\label{lem:LevyPravySpecial} 
Let $wc$ be a left special factor in $\uu$ and $eu$ be a right special factor in $\uu$  such that $u$ and $w$  belong to the same equivalence class $t$. 

If  $\mathcal{T}(t) = 2$, 
then $ u = \overline{w} $, $c=e$ and there exist letters $a,b \in \mathcal{A}$, $a\neq b$, such that 
\begin{equation}\label{eq:celyBext}{\rm Bext}({w})\cup {\rm Bext}(\overline{w}) = \{awc, bwc, c \overline{w}a, c \overline{w}b \} .\end{equation}
\end{lemma}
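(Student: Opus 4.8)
The statement combines two special factors whose (shared) equivalence class $t$ has the minimal extension number $\mathcal{T}(t)=2$, and forces a very rigid structure. I would start from the two lower bounds already available: Item~2 of Lemma~\ref{lem:simpleObservations} applied to the right special factor $eu$ gives $\mathcal{T}(t)\geq \#\mathrm{Rext}(u)\geq 2$ together with an explicit description of $\mathrm{Bext}(u)\cup\mathrm{Bext}(\overline{u})$ when $\mathcal{T}(t)=2$; Item~3 applied to the left special factor $wc$ gives $\mathcal{T}(t)\geq \#\mathrm{Lext}(w)\geq 2$ together with the dual description. The point is that $\mathcal{T}(t)$ is a property of the class $t=\{w,\overline{w}\}$, so both descriptions must hold simultaneously. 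Writing $u\in\{w,\overline{w}\}$, I would split into the two cases $u=w$ and $u=\overline{w}$.

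\textbf{Main case analysis.} If $u=w$, then $w=wc\cdots$ is simultaneously left special and (after one letter) right special; Item~3 gives $\{awc,bwc\}\subset \mathrm{Bext}(w)\cup\mathrm{Bext}(\overline{w})\subset\{awc,bwc,c\overline{w}a,c\overline{w}b\}$, while Item~2 applied to $ew=eu$ gives $\{ewa',ewb'\}\subset \mathrm{Bext}(w)\cup\mathrm{Bext}(\overline{w})\subset\{ewa',ewb',a'\overline{w}e,b'\overline{w}e\}$ for suitable $a'\neq b'$. Intersecting these two constraints on the same four-element-at-most set forces the two ``allowed'' supersets to coincide up to relabeling; comparing which words can appear, one finds a contradiction unless $c=e$, but then $w$ would begin and be preceded by the same letter pattern in a way that collides with $\#\mathrm{Lext}(w)=\#\mathrm{Rext}(w)=2$ and $\mathcal{T}(t)=2$ — I expect this case either to be impossible or to collapse into the $u=\overline{w}$ case after checking that the only consistent solution has $\overline{w}=w$, i.e.\ $w$ a palindrome, which then makes the two statements literally the same. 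If $u=\overline{w}$, then $e\overline{w}$ right special means, by Item~2 with the roles $cw\leftrightarrow e\overline{w}$ of the ``$c$'' there, that $\{e\overline{w}a,e\overline{w}b\}\subset\mathrm{Bext}(\overline{w})\cup\mathrm{Bext}(w)\subset\{e\overline{w}a,e\overline{w}b,a we,bwe\}$ (reversing), and $wc$ left special gives $\{awc,bwc\}\subset\mathrm{Bext}(w)\cup\mathrm{Bext}(\overline{w})\subset\{awc,bwc,c\overline{w}a,c\overline{w}b\}$. Since $\mathcal{T}(t)=2$, the set $\mathrm{Bext}(w)\cup\mathrm{Bext}(\overline{w})$ has exactly two classes; comparing the two supersets, the words $awc,bwc$ (two non-equivalent elements forcing two classes) must match up with $e\overline{w}a,e\overline{w}b$ via the reversal $\overline{awc}=c\overline{w}\overline{a}$-type identifications, which pins $c=e$ and makes the four words $awc,bwc,c\overline{w}a,c\overline{w}b$ pair up as $\{awc,c\overline{w}a\}$ and $\{bwc,c\overline{w}b\}$; then $\mathcal{T}(t)=2$ exactly says both of these classes are present and nothing else is, i.e.\ \eqref{eq:celyBext} holds with equality.

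\textbf{Where the difficulty lies.} The bookkeeping obstacle is that Lemma~\ref{lem:simpleObservations} is stated with an auxiliary letter ``$c$'' on the \emph{left} for the right-special case and on the \emph{right} for the left-special case, and here the same letter name $c$ is used for the right extension of $w$ in ``$wc$''; I must carefully translate ``$eu$ is right special'' into the hypothesis ``$cw'$ is a right special factor'' of Item~2 with $w'\in t$ and the lemma's ``$c$'' being our $e$, and similarly keep straight that reversing $awc$ gives $\overline{c}\,\overline{w}\,\overline{a}=c\overline{w}a$ only because $a,c$ are single letters. The genuinely delicate step is ruling out (or absorbing) the case $u=w$: one has to argue that a factor $w$ that is left special, is followed by $c$, and also arises as $u$ in a right special $eu$, cannot have $\mathcal{T}(t)=2$ unless $w=\overline{w}$, which requires tracking that $xw$-type left extensions and $wc$-type right extensions together would produce a third equivalence class in $\mathrm{Bext}(w)\cup\mathrm{Bext}(\overline{w})$ — essentially the same ``three classes'' argument used at the end of the proof of Theorem~\ref{thm:Conjectur27}. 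Once the class structure is forced, reading off $u=\overline{w}$, $c=e$, and \eqref{eq:celyBext} is immediate.
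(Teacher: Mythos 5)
Your proposal follows essentially the same route as the paper: apply Items 2 and 3 of Lemma~\ref{lem:simpleObservations} to the two special factors, intersect the resulting containments for ${\rm Bext}(w)\cup{\rm Bext}(\overline{w})$, and extract $c=e$ and the equality \eqref{eq:celyBext} from a first-letter/last-letter comparison of the two four-element supersets. The one place you hedge --- the case $u=w$ --- is in fact immediate and needs no interaction with $e$ or a ``third class'' count: if $u=w\neq\overline{w}$, then Item 3 with $\mathcal{T}(t)=2$ already gives ${\rm Bext}(w)\subset\{awc,bwc\}$, so $w$ has the single right extension $c$ and cannot sit inside a right special factor $ew$, while the subcase $w=\overline{w}$ is simply absorbed into $u=\overline{w}$.
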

\begin{proof} 
We apply Lemma \ref{lem:simpleObservations} to deduce that $wc$ has exactly two left extensions, say $a_L$ and $b_L$,  and similarly, $eu$ has two right extensions $a_R$ and $b_R$. 

    Let us show that  $u$ equals $\overline{w}$. Indeed, as otherwise, $u=w \neq \overline{w}$  and by Item 3 of Lemma \ref{lem:simpleObservations},  $w$ would have just one right extension, namely $c$. It is impossible as $u$ is a right special factor. Combining Items 2 and 3 of  Lemma \ref{lem:simpleObservations}, we get 
\begin{enumerate}    
\item $ {\rm Bext}(w)\cup{\rm Bext}(\overline{w})\subset \{a_Lwc, b_Lwc, c\overline{w}a_L, c\overline{w}b_L\}$, 

\item ${\rm Bext}(w)\cup{\rm Bext}(\overline{w})\subset \{e\overline{w}a_R, e\overline{w}b_R, a_Rwe, b_Rwe\}$,

\item $\{a_Lwc, b_Lwc\}\cup \{e\overline{w}a_R, e\overline{w}b_R\}\subset {\rm Bext}(w)\cup{\rm Bext}(\overline{w})$. 
 \end{enumerate}   

If $c$ were not equal to $e$, then the inclusions 1. and 3. above would force $\{e\overline{w}a_R, e\overline{w}b_R\} \subset \{a_Lwc, b_Lwc\}$, which is impossible,  as
 $a_R\neq b_R$.  Hence $c=e$ and $ \{a_L,b_L\} = \{a_R,b_R\}$. 
    \end{proof}


 




\begin{lemma}\label{lem:RSdelkyN+1} Let $\uu$ be an aperiodic sequence over an alphabet $\mathcal{A}$  and $n \in \N$.     
If  $r_\uu(n+2) =  r_\uu(n)+1$, then  
$\uu$ contains only one  right special factor of length $n+1$ and  this factor  has exactly two right extensions. Moreover, the reversal of the right special factor is a left special factor of $\uu$ and both of those special factors occur infintely many times in $\uu$. 
\end{lemma}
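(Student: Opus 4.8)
The plan is to exploit the counting formula \eqref{eq:vypocetr(n+2)} together with the two lower-bound lemmas. Since $\uu$ is aperiodic, Proposition~\ref{pro:RSaLS} supplies a right-sided sequence $\xx_L$ whose every prefix has two left extensions occurring infinitely often; let $wc$ be its prefix of length $n+1$ and let $t_L$ be the class of $w$. By Item 3 of Lemma~\ref{lem:simpleObservations}, $\mathcal{T}(t_L)\ge 2$. Likewise take $i={\rm Ind}_\uu(n)$; since $\uu$ is aperiodic we have ${\rm Ind}_\uu(m)>0$ for all large $m$, and one checks $i>0$ here (the case $i=0$ is excluded exactly as in the proof of Theorem~\ref{thm:Conjectur27}, because then every class would contribute $\ge 1$ and $t_L$ would contribute $\ge 2$, giving $r_\uu(n+2)\ge r_\uu(n)+1$ with room to spare once we also produce a second surplus). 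Let $v$ be the factor of length $n$ at position $i$ and $t_{\rm Ind}$ its class; Lemma~\ref{lem:prvniOpakovany} gives $\mathcal{T}(t_{\rm Ind})\ge 2$.

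Next I would force $t_{\rm Ind}=t_L$. If $t_{\rm Ind}\ne t_L$ then two distinct classes contribute $\ge 2$ while every other class contributes $\ge 1$, so \eqref{eq:vypocetr(n+2)} yields $r_\uu(n+2)\ge r_\uu(n)+2$, contradicting the hypothesis. Hence $v\in\{w,\overline w\}$. As in the end of the proof of Theorem~\ref{thm:Conjectur27}, writing $x=u_{i-1}$ and $d$ for the letter with $vd\in\mathcal{L}(\uu)$ at position $i$, the factors $xvd$, $a_Lwc$, $b_Lwc$ all lie in ${\rm Bext}(w)\cup{\rm Bext}(\overline w)$ and are pairwise non-equivalent (the first has no occurrence $\ge i$ while the other two occur infinitely often, and $a_L\ne b_L$), so $\mathcal{T}(t_L)\ge 3$ \emph{unless} $xvd$ coincides, up to $\sim_r$, with one of $a_Lwc,b_Lwc$ — which it cannot, by the occurrence argument. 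So in fact $\mathcal{T}(t_L)\ge 3$, again contradicting $r_\uu(n+2)=r_\uu(n)+1$, \emph{except} when this surplus of $2$ in the single class $t_L$ is the \emph{only} surplus, i.e. $\mathcal{T}(t)=1$ for every other class. In particular $t_{\rm Ind}=t_L$, $\mathcal{T}(t_L)=3$ exactly, and every class $t\ne t_L$ has $\mathcal{T}(t)=1$.

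From $\mathcal{T}(t)=1$ for all $t\ne t_L$ I would deduce, via Items 2 and 3 of Lemma~\ref{lem:simpleObservations} (in contrapositive form), that no factor of length $n+1$ other than $w$, $\overline w$ is right special or left special, and that the right special factor is unique of its length. More precisely: any right special factor $re$ of length $n+1$ would give $\mathcal{T}(t')\ge 2$ for the class $t'$ of $r$; forcing $t'=t_L$ shows the right special factor is $w$ or $\overline w$; combined with the existence of $wc$ as a left special factor and the equality $\mathcal{T}(t_L)=3$ (so in particular $\#{\rm Rext}$ of the right special representative is exactly $2$, since $3$ equivalence classes in ${\rm Bext}(w)\cup{\rm Bext}(\overline w)$ leave no room for a third right extension once two left extensions are also present), Lemma~\ref{lem:LevyPravySpecial}-type reasoning pins down that the right special factor and the left special factor $wc$ are reversals of one another. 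Their infinitude of occurrences comes for free: $wc$ occurs infinitely often by Proposition~\ref{pro:RSaLS}, hence so does its reversal $c\overline w$, hence $\overline w$ and $w$ do.

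The main obstacle I anticipate is the bookkeeping in the last step: carefully arguing that ``every non-$t_L$ class has $\mathcal{T}=1$'' rules out \emph{all} other bilateral special factors (both left and right special of length $n+1$) and that the two special factors are genuinely reversals, rather than, say, one being right special with $w$ and the left special factor sitting in a different-looking class. This is where one must invoke Lemma~\ref{lem:LevyPravySpecial} (or re-derive its conclusion in this setting) and check that $\mathcal{T}(t_L)=3$ is consistent only with $u=\overline w$, $c=e$, and exactly two left and two right extensions — essentially a finite case analysis on the shape of the three equivalence classes inside $\{a_Lwc,b_Lwc,c\overline w a_L,c\overline w b_L,\dots\}$. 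Everything else is a direct application of \eqref{eq:vypocetr(n+2)} and the earlier lemmas.
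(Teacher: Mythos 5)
There is a genuine gap, and it occurs at the heart of your counting argument: you work with only two ``surplus'' classes, $t_L$ and $t_{\rm Ind}$, whereas the argument needs a third. First, your claim that $i={\rm Ind}_\uu(n)>0$ is false: aperiodicity does not prevent ${\rm Ind}_\uu(n)=0$. For any recurrent aperiodic sequence --- in particular for a Sturmian sequence, which does satisfy $r_\uu(n+2)=r_\uu(n)+1$ --- every prefix reoccurs, so ${\rm Ind}_\uu(n)=0$ for all $n$. In that case there is no class $t_{\rm Ind}$ to exploit, $t_L$ is the only class you have exhibited with $\mathcal{T}\geq 2$, and the resulting bound $r_\uu(n+2)\geq r_\uu(n)+1$ is exactly consistent with the hypothesis; no contradiction arises and no ``second surplus'' is available, so the case $i=0$ cannot be excluded and must be analysed. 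Second, even when $i>0$, the step ``$t_{\rm Ind}\neq t_L$ implies $r_\uu(n+2)\geq r_\uu(n)+2$'' relies on every other class contributing $\mathcal{T}\geq 1$, which fails: if ${\rm Ind}_\uu(n-1)>0$, the class of the length-$n$ prefix has $\mathcal{T}=0$ by Item 1 of Lemma~\ref{lem:simpleObservations}, and this loss of $1$ cancels one of your two surpluses, leaving only $r_\uu(n+2)\geq r_\uu(n)+1$. Hence you cannot force $t_{\rm Ind}=t_L$, and the subsequent derivation of $\mathcal{T}(t_L)\geq 3$ collapses.

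The missing ingredient is the second half of Proposition~\ref{pro:RSaLS}: the left-sided sequence $\mathbf{y}_R$ provides a right special factor $eu$ of length $n+1$ that occurs infinitely often, hence a further class $t_R$ with $\mathcal{T}(t_R)\geq 2$. With the three classes $t_L$, $t_R$, $t_{\rm Ind}$ the counting closes: if $i=0$ (or, for $i>0$, if $t_{\rm Ind}\notin\{t_L,t_R\}$) the hypothesis forces $t_L=t_R$ with $\mathcal{T}=2$, and Lemma~\ref{lem:LevyPravySpecial} identifies the unique right special factor as $c\overline{w}=eu$ with exactly two right extensions; only when $t_{\rm Ind}=t_L$ does your $\mathcal{T}(t_L)\geq 3$ analysis take over. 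Note also that the ``occurs infinitely many times'' part of the conclusion is obtained precisely because the right special factor is identified with the suffix $eu$ of $\mathbf{y}_R$; without $t_R$ you have no source of an infinitely occurring right special factor to feed into Lemma~\ref{lem:LevyPravySpecial}. Your sketch of the final bookkeeping (ruling out other special factors and matching the left and right special factors as reversals) is in the right spirit, but it cannot be completed from the two classes you start with.
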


\begin{proof} 
If $n=0$, i.e., $r(2)=r(0)+1=2$, it is not difficult to see that either there exist two letters $a,b$ such that $aa,ab,ba$ are all factors of length $2$ in $\uu$ or there exist three letters $a,b,c$ such that $ac, bc, ca, cb$ are all factors of length $2$ in $\uu$. The statement clearly holds in this case.

In the sequel, consider $n\geq 1$. 
Denote $eu$ the suffix of length $n+1$ of ${\bf y}_R$ and  $wc$ the prefix  of length $n+1$ of ${\bf x}_L$. By Proposition~\ref{pro:RSaLS}, both $eu$ and $wc$ occur infinitely many times in $\uu$.  By $t_R$ and $t_L$ we  denote the equivalence classes containing $u$ and  $w$, respectively.    By Items 2 and 3 of Lemma \ref{lem:simpleObservations},  $\mathcal{T}(t_L)\geq 2$ and $\mathcal{T}(t_R)\geq 2$. 

Assume that $hf$ with $h \in \mathcal{A}$ is a right special factor of $\uu$ of length $n+1$,  i.e., for some letters $A,B \in \mathcal{A}, A\neq B$, the factors  $hfA$ and $hfB$ belong to the language of $\uu$. If $t_0 \in \langle \uu \rangle_n$   stands for the class containing $f$, we can write  
\begin{equation}\label{eq:libovolnyRS} \{hfA, hfB\}\subset {\rm Bext}(f) \quad \text{ and }\ \mathcal{T}(t_0)\geq 2. 
\end{equation}

\medskip

Our aim is to show that  $hf = c \overline{w} = eu $. 
Denote $i = {\rm Ind}_\uu(n)$.   

\begin{description}
    \item[Case $i=0$.]  \ \ Clearly, ${\rm Ind}_\uu(n-1)=0$. Then, Item 1 of Lemma  \ref{lem:simpleObservations} gives  $\mathcal{T}(t)\geq 1$ for every $t \in \langle\uu\rangle_n$. Equality \eqref{eq:vypocetr(n+2)}  forces   $\mathcal{T}(t_L) =\mathcal{T}(t_R) =2$ and $t_L=t_R$.  Moreover, $\mathcal{T}(t)=1$ for every class $t \neq t_L,t_R $. Lemma \ref{lem:LevyPravySpecial}  implies that $c=e$ and $u=\overline{w}$. Therefore $c\overline{w}$ is the unique right special factor of length $n+1$ and it has exactly two right extensions by \eqref{eq:celyBext}. Hence, $hf =  c \overline{w}=eu$.

\item[Case $i>0$.] \ \  Let $v$ be  the factor occurring at the position $i = {\rm Ind}_\uu(n)$, let $t_{\rm Ind}$ denote  the equivalence class containing $v$ and $x=u_{i-1}$.  Recall that by Lemma~\ref{lem:prvniOpakovany}, $\mathcal{T}(t_{\rm Ind})\geq 2$.

If \  $t_{\rm Ind}\neq t_L$ and $t_{\rm Ind}\neq t_R$, then $t_R=t_L$, $\mathcal{T}(t_R)=\mathcal{T}(t_L) =\mathcal{T}(t_{\rm Ind}) = 2$ and 
 $\mathcal{T}(t)\leq 1$ for other classes. 

Let us explain that $t_0\neq t_{\rm Ind}$. 

As $\mathcal{T}(t_{\rm Ind}) = 2$, the definition of ${\rm Ind}_\uu(n)$ gives  
 ${\rm Bext}(v)\cup{\rm Bext}(\overline{v})\subset \{xvd, yvd, d\overline{v}y\}$ for some letters $y,d$ such that $y\neq x$. On the other hand,   $\{hfA, hfB\}\subset {\rm Bext}(f)$.  Obviously,  $ {\rm Bext}(f)$ cannot be a subset of ${\rm Bext}(v)\cup{\rm Bext}(\overline{v})$,  hence $t_0\neq t_{\rm Ind}$.  Consequently, $t_0 = t_R$ and  the reasoning  is the same as in the previous case.

If \  $t_{\rm Ind}=t_L$, then similarly as in the proof of Theorem~\ref{thm:Conjectur27}, we get $\mathcal{T}(t_L)\geq 3$. Hence  $t_0=t_L=t_R$,  \ $\mathcal{T}(t_L)= 3$ and   $\mathcal{T}(t)\leq 1$ for all other classes $t \in \langle \uu\rangle_n$. The fact that $\mathcal{T}(t_L)= 3$ forces \begin{equation}\label{eq:hodne}{\rm Bext}(w)\cup {\rm Bext}(\overline{w})\subset \{xvd, awc, bwc, c \overline{w}a, c \overline{w}b \}.\end{equation}

The factor  $xv$ has only one occurrence in $\uu$, whereas  $ hf$ occurs at least twice (as $hfA$ and $hfB$). Hence, $xv \neq  $  $hf$ and $hf = c \overline{w}$.

\end{description}

It is readily seen that in all cases, the reversal $wc$ of the right special factor $c\overline{w}$ is a left special factor of $\uu$.
\end{proof}

\begin{theorem}\label{thm:r+1} Let $\uu$ be a sequence. Then $r_\uu(n+2) = r_\uu(n)+1$ for every $n \in \N$ if and only if
\begin{itemize}
    \item either $\uu$ is a Sturmian sequence,

\item or there exist a Sturmian sequence $\vv$ over $\{a,b\}$ and a letter $c \notin \{a,b\}$  such that 
 $\uu = \pi(\vv)$ or $\uu = c\pi(\vv)$, where $\pi$ is a morphism given by 

\medskip 
\centerline{$\pi: a \mapsto ac, \ b\mapsto bc$. }
  
\end{itemize}

\end{theorem}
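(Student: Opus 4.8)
The plan is to prove Theorem~\ref{thm:r+1} by establishing both implications, with the forward direction being the substantial one.

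\textbf{The easy direction ($\Leftarrow$).} If $\uu$ is Sturmian, then $r_\uu(n) = 1+\lfloor\frac{n+1}{2}\rfloor$ by Theorem~\ref{thm:Theorem35Unich}, and a direct check shows this forces $r_\uu(n+2)=r_\uu(n)+1$ for all $n$. If $\uu = \pi(\vv)$ with $\pi: a\mapsto ac,\ b\mapsto bc$, then Lemma~\ref{lem:ternary} gives exactly the desired identity. The case $\uu = c\pi(\vv)$ differs from $\pi(\vv)$ only by a prefix, so $\mathcal{L}(\uu)$ and $\mathcal{L}(\pi(\vv))$ agree except possibly on a bounded initial segment; since $\vv$ has language closed under reversal and $\pi(\vv)$ is recurrent with the same factors, one verifies that prepending $c$ changes neither the set of factors of each length nor the palindromes, so $r_\uu = r_{\pi(\vv)}$ and again the identity holds. (Here one should note that $\uu$ is aperiodic in all these cases, since $\vv$ is, so the periodic situation of Theorem~\ref{thm:Conjectur27} does not interfere.)

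\textbf{The hard direction ($\Rightarrow$).} Assume $r_\uu(n+2)=r_\uu(n)+1$ for every $n\in\N$. By Theorem~\ref{thm:Conjectur27}, $\uu$ is aperiodic. First I would establish that $\uu$ is \emph{quasi-Sturmian}: Lemma~\ref{lem:RSdelkyN+1} tells us that for every $n\geq 1$, $\uu$ has exactly one right special factor of length $n+1$ and it has exactly two right extensions; combined with the right-special-factor formula $\mathcal{C}_\uu(n+1)-\mathcal{C}_\uu(n) = \sum_{w\in\mathcal{L}_n(\uu)}(\#\mathrm{Rext}(w)-1)$, this gives $\mathcal{C}_\uu(n+1)-\mathcal{C}_\uu(n)=1$ for all $n\geq 1$, hence $\mathcal{C}_\uu(n)=n+c$ eventually, i.e., $\uu$ is quasi-Sturmian. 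By Theorem~\ref{thm:cassaigne}, write $\uu = p\varphi(\vv)$ with $\vv$ Sturmian over $\{a,b\}$ and $\varphi$ non-periodic. The task is then to pin down $\varphi$ (and $p$).

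The crux will be exploiting the extra rigidity coming from Lemma~\ref{lem:RSdelkyN+1}: not only is there a unique right special factor $R_{n+1}$ of each length $n+1$, but its reversal $\overline{R_{n+1}}$ is \emph{left} special and occurs infinitely often, and moreover by Lemma~\ref{lem:LevyPravySpecial} the both-sided extension set of $R_{n+1}$ is forced to be exactly $\{awc, bwc, c\overline{w}a, c\overline{w}b\}$ for a single pair $a\neq b$ and a single letter $c$ — so the right special factor $c\overline w$ has its two extensions $a,b$ equal to the two left extensions of its reversal $wc$. I would show this symmetry constraint, applied across all lengths $n$, forces $\mathcal{L}(\uu)$ to be closed under reversal (so $p$ can be absorbed / $\uu$ is, up to a prefix, recurrent with reversal-closed language) and then forces the morphism $\varphi$ to have the very special shape where one letter (call it $c$) is ``interleaved'': concretely, that $\varphi(a)$ and $\varphi(b)$ either coincide with $a,b$ (the Sturmian case, $c=1$, no new letter) or are $ac$ and $bc$. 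The mechanism is that the image alphabet size is controlled by $\mathcal{P}_\uu$ via $r_\uu(n)=\frac12(\mathcal{C}_\uu(n)+\mathcal{P}_\uu(n))$: the hypothesis $r_\uu(n+2)=r_\uu(n)+1$ together with $\mathcal{C}_\uu(n+1)-\mathcal{C}_\uu(n)=1$ pins down $\mathcal{P}_\uu(n+2)-\mathcal{P}_\uu(n)$, forcing the palindromic complexity to alternate in the pattern characteristic of the morphism $\pi$ (one palindrome at even lengths $\geq 2$... wait, zero at even lengths and three at odd lengths in the $\pi$ case, versus the $1,2,1,2,\dots$ pattern for Sturmian), and this dichotomy in the palindrome count is exactly what separates the two bullets of the theorem.

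\textbf{Main obstacle.} The delicate part is passing from the local structural data (unique right special factor with the rigid Bext of Lemma~\ref{lem:LevyPravySpecial}, for every $n$) to the \emph{global} conclusion that $\varphi$ is either trivial or $\pi$. One must rule out all other non-periodic morphisms $\varphi$ that happen to produce $\mathcal{C}_\uu(n)=n+c$; the reflection hypothesis is what excludes them, and making that argument airtight — likely by analyzing how the special factors of $\uu$ are images under $\varphi$ of the (unique) special factor of $\vv$, and showing any ``return word'' structure of $\varphi$ other than $a\mapsto ac, b\mapsto bc$ would either break reversal-closure of $\mathcal{L}(\uu)$ or make some $\mathcal{T}(t)\geq 2$ for a second, non-matching class $t$ — is the technical heart. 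I would expect to do this by induction on $n$, tracking the unique right special factor $R_{n+1}$ and its left-special reversal simultaneously, and using Lemma~\ref{lem:LevyPravySpecial} at each step to propagate the constraint $a,b,c$ upward, finally reading off $\varphi$ from the stabilized pattern.
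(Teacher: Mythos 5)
Your backward direction is fine and matches the paper (Theorem~\ref{thm:Theorem35Unich} for the Sturmian case, Lemma~\ref{lem:ternary} for $\pi(\vv)$, and your recurrence argument correctly handles the prepended $c$). The forward direction, however, has a genuine gap: everything after ``the task is then to pin down $\varphi$'' is a plan rather than an argument. You explicitly flag the passage from the local data (unique right special factor with rigid $\mathrm{Bext}$ at every length) to the global conclusion (that $\varphi$ must be trivial or $a\mapsto ac$, $b\mapsto bc$) as the ``technical heart'' and the ``main obstacle'', and you never carry it out; the proposed route through Cassaigne's decomposition $\uu=p\varphi(\vv)$ and a palindromic-complexity bookkeeping (where you are yourself unsure of the parity pattern) is not completed, and it is not clear it would close without substantial extra work — Cassaigne's theorem gives you almost no control over $p$ and $\varphi$ beyond non-periodicity.

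The paper's proof avoids all of this with one elementary observation you missed: apply the hypothesis at $n=0$. From $r_\uu(2)=r_\uu(0)+1=2$ there are exactly two reversal-classes of length-$2$ factors, which forces the set of length-$2$ factors to be either binary or exactly $\{ac,bc,ca,cb\}$ for three letters $a,b,c$. The second case immediately yields the interleaved form $\uu=cv_0cv_1\cdots$ or $\uu=v_0cv_1c\cdots$ with $\vv=v_0v_1\cdots$ binary and aperiodic — no Cassaigne decomposition needed. Sturmian-ness of $\vv$ (resp.\ of $\uu$ in the binary case) then follows by contradiction: two distinct right special factors $av'$, $bv'$ of minimal length in $\vv$ would lift to two distinct right special factors $ac\pi(v')$, $bc\pi(v')$ of $\uu$, contradicting the uniqueness in Lemma~\ref{lem:RSdelkyN+1}. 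If you want to salvage your write-up, replace the entire ``crux'' paragraph with this $n=0$ analysis; as it stands, the forward implication is not proved.
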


\begin{proof} 
$(\Rightarrow)$ \ 
If $\uu$ is binary and $r_\uu(n+2) = r_\uu(n)+1$ for every $n \in \N$, then $\uu$ is Sturmian by Lemma~\ref{lem:RSdelkyN+1} and Item 2 of Theorem~\ref{thm:SturmianEquivalence}.
Let us consider the case of $\uu$ defined over $\mathcal A$ with $\#\mathcal A\geq 3$.
By Theorem \ref{thm:Conjectur27}, $\uu$ is aperiodic. 
Since $r(2)=r(0)+1=2$, it is not difficult to see that there exist three letters $a,b,c$ such that $\{ac, bc, ca, cb\}$ is the set of factors of length $2$ in $\uu$.
Consequently, $\uu=cv_0cv_1cv_2\cdots$ or $\uu=v_0cv_1cv_2c\cdots$, where $\vv=v_0v_1v_2\cdots$ is an aperiodic binary sequence. 
Assume $\vv$ is not Sturmian.
It means that there exists $n$ such that ${\mathcal C}_\vv(n+1) - {\mathcal C}_\vv(n)\geq 2$. As $\vv$ is binary, there exist at least two distinct right special factors of length $n$ in $\vv$. Find the minimum $n$ with this property and denote $u,w$ the distinct right special factors of length $n$. 
 Minimality of $n$  and the fact that a suffix of a right special factor is a right special factor as well implies, without loss of generality,  
 $u=av'$ and $w=bv'$, where $v'$ may be empty. Clearly, $av'a, av'b, bv'a, bv'b$ are factors of $\vv$ and $ac\pi(v')a, ac\pi(v')b, bc\pi(v')a, bc\pi(v')b$ are factors of $\uu$. Consequently, there are two distinct right special factors $ac\pi(v')$ and $bc\pi(v')$ in $\uu$, which is in contradiction to Lemma~\ref{lem:RSdelkyN+1}. 

 $(\Leftarrow)$ \ If $\uu$ is a Sturmian sequence, then the equality $r_\uu(n+2) = r_\uu(n)+1$ for every $n \in \N$ follows from Item 2 of Theorem~\ref{thm:Theorem35Unich}. 
 If  $\uu = \pi(\vv)$ or $\uu = c\pi(\vv)$, where $\pi$ is a morphism given by 
$\pi: a \mapsto ac, \ b\mapsto bc$ and $\vv$ is a Sturmian sequence over $\{a,b\}$, then the equality $r_\uu(n+2) = r_\uu(n)+1$ for every $n \in \N$ follows from Lemma~\ref{lem:ternary}.
  
\end{proof}

\bigskip
In the sequel, our goal is to characterize the sequences $\uu$ satisfying $r_\uu(n+2) = r_\uu(n)+1$ for every sufficiently large $n \in \N$.
\begin{theorem}[\cite{AlCaLiShSt2025}, Theorem 43]\label{thm:OniQuasiSturm} Let $\uu$  be a quasi-Sturmian sequence. 
\begin{enumerate}
    \item  If a suffix of $\uu$ has the language closed under reversal, then $r_{\uu}(n) = \frac{n}{2} + \mathcal{O}(1)$; 
    \item  If no suffix of $\uu$ has the language closed under reversal, then $r_{\uu}(n) = n + \mathcal{O}(1)$.
\end{enumerate}

\end{theorem}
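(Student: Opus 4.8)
The plan is to express $r_\uu$ through a single elementary counting function and then control that function using the quasi-Sturmian structure. For an arbitrary sequence $\uu$, split $\mathcal L_n(\uu)$ into the palindromes, the non-palindromes whose reversal is \emph{not} a factor of $\uu$, and the non-palindromes whose reversal \emph{is} a factor; counting $\sim_r$-classes separately in the three parts (singletons in the first two, pairs in the third) yields
\begin{equation*}
 r_\uu(n)\;=\;{\mathcal C}_\uu(n)-\tfrac12\,D_\uu(n)+\tfrac12\,{\mathcal P}_\uu(n),\qquad
 D_\uu(n):=\#\{w\in\mathcal L_n(\uu):\ \overline w\in\mathcal L_n(\uu)\}
\end{equation*}
(when $\mathcal L(\uu)$ is closed under reversal, $D_\uu(n)={\mathcal C}_\uu(n)$ and this is exactly Item~(b) of Theorem~9 in~\cite{AlCaLiShSt2025}). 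For a quasi-Sturmian $\uu$ one has ${\mathcal C}_\uu(n)=n+\mathcal O(1)$ by definition and ${\mathcal P}_\uu(n)=\mathcal O(1)$ (a quasi-Sturmian sequence has bounded palindromic complexity), so $r_\uu(n)=n-\tfrac12 D_\uu(n)+\mathcal O(1)$, and the whole theorem reduces to showing $D_\uu(n)={\mathcal C}_\uu(n)+\mathcal O(1)$ in the first case and $D_\uu(n)=\mathcal O(1)$ in the second.

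Next I would pass to the uniformly recurrent core. By Theorem~\ref{thm:cassaigne}, $\uu=p\,\varphi(\vv)$ with $\vv$ Sturmian and $\varphi$ non-periodic; since $\varphi(ab)\neq\varphi(ba)$ the morphism $\varphi$ is non-erasing, so $\uu':=\varphi(\vv)$ is uniformly recurrent and it is a suffix of $\uu$ prefixed by the finite word $p$. Hence $\mathcal L_n(\uu)$ and $\mathcal L_n(\uu')$ differ by at most $|p|$ words, so ${\mathcal C}_\uu(n),{\mathcal P}_\uu(n),D_\uu(n)$ agree with their $\uu'$-analogues up to $\mathcal O(1)$; moreover, by recurrence of $\uu'$, every suffix of $\uu$ starting past position $|p|$ has language exactly $\mathcal L(\uu')$.

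Now the two cases. If some suffix $\uu^{(m)}$ of $\uu$ has $\mathcal L(\uu^{(m)})$ closed under reversal, then $D_{\uu^{(m)}}(n)={\mathcal C}_{\uu^{(m)}}(n)$ for every $n$; transferring back to $\uu$ (finitely many extra factors) gives $D_\uu(n)={\mathcal C}_\uu(n)+\mathcal O(1)=n+\mathcal O(1)$, so $r_\uu(n)=\tfrac n2+\mathcal O(1)$. If no suffix of $\uu$ has reversal-closed language, then in particular the far suffixes, whose language is $\mathcal L(\uu')$, do not, so $\mathcal L(\uu')$ is not closed under reversal. Let $X$ be the (minimal) subshift generated by $\uu'$ and $\overline X$ its mirror image; then $\mathcal L(\overline X)=\overline{\mathcal L(\uu')}\neq\mathcal L(\uu')=\mathcal L(X)$, so $X\neq\overline X$ are distinct minimal subshifts. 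Pick $f\in\mathcal L(X)\setminus\mathcal L(\overline X)$ (such $f$ exists, since $\mathcal L(X)\subseteq\mathcal L(\overline X)$ would force $X\subseteq\overline X$, hence $X=\overline X$ by minimality). Uniform recurrence of $X$ provides a constant $K$ such that every word of $\mathcal L(X)$ of length $\geq K$ contains $f$ and therefore lies outside $\mathcal L(\overline X)$; equivalently, for $n\geq K$ no $w\in\mathcal L_n(\uu')$ satisfies $\overline w\in\mathcal L_n(\uu')$, that is $D_{\uu'}(n)=0$. Hence $D_\uu(n)=\mathcal O(1)$ and $r_\uu(n)=n+\mathcal O(1)$.

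The step I expect to be the crux is the second case: realizing that a failure of reversal-closedness is not a quantitative defect but forces the reversible factors to \emph{disappear entirely} for large $n$, a fact that becomes transparent only after identifying the two relevant languages as those of distinct minimal subshifts. The rest is bookkeeping — the counting identity, the known bound on palindromic complexity, and the reduction to the uniformly recurrent core, including the mildly awkward possibility that $\mathcal L(\uu')$ is not reversal-closed while some short-range suffix of $\uu$ happens to be; that possibility is handled uniformly by the first case, which uses only reversal-closedness of that one suffix.
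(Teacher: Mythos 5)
This statement is imported from~\cite{AlCaLiShSt2025} (Theorem~43) and the paper gives no proof of it, so there is no in-paper argument to compare against; I can only assess your proof on its own terms, and it is essentially sound. Your counting identity $r_\uu(n)={\mathcal C}_\uu(n)-\tfrac12 D_\uu(n)+\tfrac12{\mathcal P}_\uu(n)$ is correct (it generalizes Item~(b) of Theorem~9 of~\cite{AlCaLiShSt2025}), the reduction to the uniformly recurrent core $\uu'=\varphi(\vv)$ via Theorem~\ref{thm:cassaigne} is legitimate ($\varphi(ab)\neq\varphi(ba)$ does force $\varphi$ to be non-erasing, and non-erasing images of Sturmian sequences are uniformly recurrent), and the key step in the second case --- that failure of reversal-closure for a uniformly recurrent language kills \emph{all} reversible factors beyond some length, since every long factor must contain a fixed witness $f$ with $\overline f\notin{\mathcal L}(\uu')$ --- is correct and is indeed where the content lies. (You could state it without subshift language: pick $f\in{\mathcal L}(\uu')$ with $\overline f\notin{\mathcal L}(\uu')$, which exists because $\#{\mathcal L}_n=\#\overline{{\mathcal L}_n}$ makes non-closure equivalent to $\exists n:\ {\mathcal L}_n\neq\overline{{\mathcal L}_n}$; uniform recurrence then gives $K$ with $D_{\uu'}(n)=0$ for $n\geq K$.) The one point you assert rather than prove is that a quasi-Sturmian sequence has ${\mathcal P}_\uu(n)={\mathcal O}(1)$. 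This is true but needs a reference: in your second case it is free, since ${\mathcal P}_\uu(n)\leq D_\uu(n)={\mathcal O}(1)$; in the first case it follows from the inequality ${\mathcal P}(n)+{\mathcal P}(n+1)\leq 2+{\mathcal C}(n+1)-{\mathcal C}(n)$ of~\cite{BaMaPe2007} applied to the reversal-closed suffix (or from the general bound of~\cite{AlBaCaDa2003}). With that citation supplied, the argument is complete.
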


\begin{theorem}\label{thm:quasiSturmian} Let $\uu$ be a sequence. 
The equality $r_\uu(n+2) = r_\uu(n)+1$  takes place for all sufficiently large $n$ if and only if  $\uu$ is quasi-Sturmian and the language of  a suffix of $\uu$ is closed under reversal.
\end{theorem}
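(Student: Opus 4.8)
The plan is to prove both implications using the structural results already established, in particular Theorem~\ref{thm:r+1} (the ``$r_\uu(n+2)=r_\uu(n)+1$ for \emph{all} $n$'' characterization), Theorem~\ref{thm:OniQuasiSturm} (the asymptotics of $r_\uu$ for quasi-Sturmian sequences), Lemma~\ref{lem:RSdelkyN+1}, and Theorem~\ref{thm:cassaigne}. The direction $(\Leftarrow)$ should be the easier one: if $\uu$ is quasi-Sturmian and some suffix of $\uu$ has language closed under reversal, then by Item~1 of Theorem~\ref{thm:OniQuasiSturm} we have $r_\uu(n)=\tfrac n2+\mathcal{O}(1)$, and combined with the general inequality $r_\uu(n+2)\ge r_\uu(n)$ (the Corollary in Section~\ref{sec:Reflection}) this squeezes the differences $r_\uu(n+2)-r_\uu(n)$ so that their partial sums grow like $\tfrac n2+\mathcal O(1)$; since each difference is a nonnegative integer, it must equal $1$ for all large $n$ (it cannot be $0$ for large $n$ by aperiodicity and Theorem~\ref{thm:Conjectur27}, and if it were $\ge 2$ infinitely often the partial sums would exceed $\tfrac n2+\mathcal O(1)$). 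One must be slightly careful and handle the two parities of $n$ separately, but this is routine. The eventually periodic case is excluded because then Theorem~\ref{thm:ProPeriodic} makes $r_\uu(n+2)=r_\uu(n)$ for large $n$.

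For the direction $(\Rightarrow)$, assume $r_\uu(n+2)=r_\uu(n)+1$ for all $n\ge n_0$. First, $\uu$ is aperiodic by Theorem~\ref{thm:Conjectur27}. Next I would show $\uu$ is quasi-Sturmian: summing the telescoping identity gives $r_\uu(2k)=k+\mathcal O(1)$ and $r_\uu(2k+1)=k+\mathcal O(1)$, so $r_\uu(n)=\tfrac n2+\mathcal O(1)$, which already rules out the ``$n+\mathcal O(1)$'' case of Theorem~\ref{thm:OniQuasiSturm}; but I still need that $\uu$ is quasi-Sturmian in the first place, i.e.\ that $\mathcal C_\uu(n)=n+\mathcal O(1)$. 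For this I adapt the argument in the proof of Lemma~\ref{lem:RSdelkyN+1}: that lemma, applied for every large $n$, shows $\uu$ has a \emph{unique} right special factor of each large length and that it has exactly two right extensions, so $\mathcal C_\uu(n+1)-\mathcal C_\uu(n)=1$ for all large $n$, hence $\mathcal C_\uu(n)=n+c$ eventually and $\uu$ is quasi-Sturmian. Then by Theorem~\ref{thm:cassaigne}, $\uu=p\,\varphi(\vv)$ with $\vv$ Sturmian and $\varphi$ non-periodic.

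It remains to show that some suffix of $\uu$ has language closed under reversal, equivalently (using Theorem~\ref{thm:OniQuasiSturm} in reverse) to rule out the possibility that no suffix is closed under reversal, since in that case Item~2 would give $r_\uu(n)=n+\mathcal O(1)$, contradicting $r_\uu(n)=\tfrac n2+\mathcal O(1)$ established above. This is the cleanest route: the two items of Theorem~\ref{thm:OniQuasiSturm} are mutually exclusive asymptotically ($\tfrac n2$ versus $n$), so once we know $\uu$ is quasi-Sturmian with $r_\uu(n)=\tfrac n2+\mathcal O(1)$, Item~2 is impossible and Item~1's hypothesis must hold. I expect the \textbf{main obstacle} to be the step establishing that $\uu$ is quasi-Sturmian, i.e.\ transferring the ``unique right special factor with two extensions'' conclusion of Lemma~\ref{lem:RSdelkyN+1} into a clean statement valid for all large $n$: the lemma is stated for a single $n$ under the hypothesis $r_\uu(n+2)=r_\uu(n)+1$, and I must check that applying it across all $n\ge n_0$ indeed pins down $\mathcal C_\uu(n+1)-\mathcal C_\uu(n)=1$ eventually (in particular that the unique right special factors of consecutive lengths are compatible, which follows because a suffix of a right special factor is right special). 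Once quasi-Sturmianicity is in hand, the rest is a short asymptotic comparison via Theorem~\ref{thm:OniQuasiSturm}.
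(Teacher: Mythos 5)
Your proposal is correct, and both directions go through. The $(\Leftarrow)$ direction is essentially the paper's own argument: Item~1 of Theorem~\ref{thm:OniQuasiSturm} gives $r_\uu(n)=\tfrac n2+\mathcal O(1)$, aperiodicity (which here follows simply because quasi-Sturmian sequences have unbounded factor complexity, so Morse--Hedlund applies; you do not really need Theorem~\ref{thm:ProPeriodic} for this) together with Theorem~\ref{thm:Conjectur27} forces $r_\uu(n+2)\ge r_\uu(n)+1$ for every $n$, and the finiteness of the possible constants squeezes the inequality into an equality for large $n$ --- the paper phrases this as $c_{n+2}\ge c_n$ with $c_n$ ranging over a finite set, which is the same telescoping/parity argument you describe. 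Likewise, the first half of your $(\Rightarrow)$ direction (Lemma~\ref{lem:RSdelkyN+1} applied for each large $n$ gives a unique right special factor with exactly two right extensions, hence ${\mathcal C}_\uu(n+1)-{\mathcal C}_\uu(n)=1$ eventually, hence quasi-Sturmian) is exactly what the paper does; your worry about compatibility of special factors across lengths is unnecessary, since the first-difference formula of Cassaigne needs only the count at each fixed length.

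Where you genuinely diverge is the closure-under-reversal step of $(\Rightarrow)$. You rule out the alternative by an asymptotic comparison: telescoping the hypothesis gives $r_\uu(n)=\tfrac n2+\mathcal O(1)$, while Item~2 of Theorem~\ref{thm:OniQuasiSturm} would force $r_\uu(n)=n+\mathcal O(1)$ if no suffix were closed under reversal; since the two items are exhaustive, Item~1's hypothesis must hold. This is valid and shorter, but it imports the full dichotomy of Theorem~43 of \cite{AlCaLiShSt2025} as a black box. The paper instead gives a direct, self-contained combinatorial argument: by Theorem~\ref{thm:cassaigne} a suffix $\uu'$ of $\uu$ is a morphic image of a Sturmian sequence and hence uniformly recurrent, so every factor of $\uu'$ occurs inside the unique right special factor of some large length, and Lemma~\ref{lem:RSdelkyN+1} guarantees that the reversal of that right special factor occurs infinitely often in $\uu$, hence lies in ${\mathcal L}(\uu')$. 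Your route buys brevity; the paper's route exhibits explicitly why reversals of factors survive into the suffix, and only uses Item~1 of Theorem~\ref{thm:OniQuasiSturm} (in the converse direction).
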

\begin{proof} ($\Rightarrow$) \ By Lemma \ref{lem:RSdelkyN+1}, there exists $n_0\in \mathbb N$ such that for every $n \geq n_0$, there is only one right special factor in ${\mathcal L}_n(\uu)$ and it has two right extensions. It implies that ${\mathcal C}_{\uu}(n+1) - {\mathcal C}_\uu(n) = 1$ for $n \geq n_0$. Hence $\uu$ is quasi-Sturmian. 
By Theorem \ref{thm:cassaigne}, a suffix of $\uu$, say $\uu'$,  is a morphic image of a Sturmian sequence. Consequently, $\uu'$ is a uniformly recurrent sequence. Thus, for every factor $u$ of $\uu'$ there exists $n\geq n_0$ such that $u$ occurs in the unique right special factor of length $n$. By Lemma~\ref{lem:RSdelkyN+1}, the reversal of the right special factor belongs to the language of $\uu$ and occurs infinitely many times in $\uu$, hence it is a factor of $\uu'$. Therefore $\overline{u}$ is a factor of $\uu'$, too.


\medskip

($\Leftarrow$) \  By Item 1 of Theorem \ref{thm:OniQuasiSturm}, there exists a finite set $C \subset \frac12 \mathbb{Z}$ such that $r_\uu(n) = \frac{n}{2} +c_n$ with $c_n \in C$ for each $n \in \N$.  The aperiodicity of $\uu$ and Theorem \ref{thm:Conjecture25} imply 
\begin{equation}\label{eq:quasi}\frac{n+2}{2}+ c_{n+2} = r_\uu(n+2)\geq r_\uu(n)+1 =\frac{n}{2}+ 1 + c_{n}. \end{equation}
It forces $c_{n+2}\geq c_n$ for every $n \in \N$. As the set $C$
 is finite, the sequences $(c_{2k})_{k \in \N}$  and $(c_{2k+1})_{k \in \N}$ are eventually constant. In other words, the inequality in \eqref{eq:quasi}  changes to equality starting from some index $n_1$.  

 \end{proof}

\section{Comments and questions}\label{sec:comments}

In addition to Conjecture 27 we have proved here, the article  \cite{AlCaLiShSt2025} also states several other conjectures and open questions. 
Let us comment on one more conjecture of $\cite{AlCaLiShSt2025}$.

\medskip
\noindent {\bf Conjecture 28:}  \  Let $\xx$ be a sequence of at most linear factor complexity. Then  the sequence $\bigl(r_\xx(n+1)-r_{\xx}(n)\bigr)_{n \in \N}$ is bounded.

\medskip

Let us explain that Conjecture 28 is valid  for sequences whose language is closed under reversal:  By Theorem 1 of \cite{Cassaigne1996}, the sequence  ${\mathcal C}_{\xx}(n+1)-{\mathcal C}_{\xx}(n)$ is bounded if $\xx$  has  at most linear factor complexity. If the language of a sequence is closed under reversal, then Item (b) of Theorem 17 in \cite{AlCaLiShSt2025} (cf. also \cite{BaMaPe2007}) says $${\mathcal P}_\xx(n) + {\mathcal P}_\xx(n+1) \leq 2+{\mathcal C}_{\xx}(n+1)-{\mathcal C}_{\xx}(n)$$ and hence  the sequence  $\bigl({\mathcal P}_\xx(n)\bigr)_{n \in \N}$ is bounded as well. Finally, Item (b) of Theorem 9 in~\cite{AlCaLiShSt2025} implies  for every $n \in \N$
$$r_\xx(n+1)-r_{\xx}(n)=\frac{1}{2}\left({\mathcal C}_{\xx}(n+1)+{\mathcal P}_\xx(n+1)\right)-\frac{1}{2}\left({\mathcal C}_{\xx}(n)+{\mathcal P}_\xx(n)\right).$$
Consequently,  $r_\xx(n+1)-r_{\xx}(n)$ is  bounded as it is the sum of three bounded sequences.

\medskip

The equivalence on the set of finite words introduced in \cite{AlCaLiShSt2025} uses the antimorphism $R$ which maps a finite word $w$ to its reversal. Formally, a class of equivalence is an orbit of a group $G= \{R, Id\}$ with the composition being the group operation.   Any group $G$ of operators on $\mathcal{A}^*$ can be used to introduce an equivalence in a natural way:  two   words $u$ and $v$ are $G$-equivalent, if  $ u=g(v)$ for some $g \in G$. 
We must  emphasize that the $G$-equivalence we are talking about  is only a special case of the equivalence considered by John Machacek in~\cite{Machacek2025}. 

\medskip

Let us consider the simplest case on the binary alphabet $\mathcal{A} = \{0,1\} $ and   the group $G$  generated  by $R$ and the  morphism $E$  which exchanges leters $0$ and $1$, i.e., $E(u_1u_2\cdots u_n) = (1-u_1)(1-u_2) \cdots (1-u_n)$. Note that $G = \{Id, R, E, ER\}$.    For the equivalence thus established, we can introduce a new complexity, say $G$-complexity,  as the function which counts the number of equivalence classes occurring in $\mathcal{L}_n(\uu)$.  We can  ask the same questions as for the  reflection complexity: 
\begin{enumerate}
    \item What is the $G$-complexity  for a given sequence $\xx$ over $\{0,1\}$.    In particular, consider the Thue-Morse sequence   or  a  complementary symmetric Rote sequence as the languages of these sequences are  closed under $G$.  
    \item  Can the $G$-complexity  be used to characterize eventually periodic sequences as is the case of reflection complexity?
    
\end{enumerate}

\section{Appendix - proof of Proposition \ref{pro:RSaLS}}

 \begin{proof} 
 First we show that for every $n \in \N$ there exists a left special factor $w$  of length $n$ and two different letters, say $a$ and $b$, such that  $aw$ and $bw$ occur in $\uu$ infinitely many times. 
Analogously,  for a  right special  factor.  
 
 Let $n \in \N$ be fixed. As $\mathcal{L}_{n+1}(\uu)$ is finite, there  exists an index $k_n \in \N$ such that every factor of length $n+1$ of the suffix ${\bf s} =  u_{k_n}u_{k_n+1}u_{k_n+2}\cdots$ of $\uu$ occurs in ${\bf s}$ infinitely many times. Thus the Rauzy graph $\Gamma_{n}$  of ${\bf s} $ is strongly connected. Since ${\bf s}$ is aperiodic, at least one vertex  of $\Gamma_{n}$, say  $f$, has in-degree $\geq 2$ and at least one vertex, say $g$,  has  out-degree $\geq 2$. In other words, there are two edges ending in $f$, they have the form $af$  and $bf$ for some letters $a\neq b$ and two edges starting in $g$ which have the form $gc$ and $gd$ for some $c,d \in \mathcal{A}$. The words $af$,  $bf$, $gc$ and $gd$   belong to $\mathcal{L}_{n+1}(\uu)$ and  by definition of $k_n$, they occur in $\uu$ infinitely many times.

\medskip
\noindent Let $L_n$ denote the list of all pairs $(\{a,b\}, f)$ with 
the property: 

\medskip

\centerline{ $af$ and $bf$ occur in $\uu$ infinitely many times, $a\neq b $ and 
 $|f| = n$. }

 \medskip

\noindent Let us stress that if $(\{a,b\}, f)$ occurs in $L_n$, then the pair $(\{a,b\}, f')$, where $f'$ is a prefix of $f$,  occurs in the list $L_{|f'|}$.  

As the alphabet $\mathcal{A}$ is finite, there exist two different letters, say $a_L,b_L$,  such that     $\{a,b\} = \{a_L,b_L\} $ for at least one pair $(\{a,b\}, f)$ from the list $L_n$ for every $n \in \N$.  

Finiteness of $\mathcal{A}$ guarantees that there exists a letter $x_0$ such that infinitely many pairs have the form   $(\{a_L, b_L\}, f)$ and  $x_0$ is a prefix of $f$. Analogously, there exists a letter $x_1$ such that infinitely many pairs have the  form   $(\{a_L, b_L\}, f)$ and  $x_0x_1$ is a prefix of $f$, etc. In this way we construct a~sequence ${\bf x}_L = x_0x_1x_2 \cdots$.

 \end{proof}

\end{document}